\newfont {\cyr} {wncyr10}
\newtheorem{theorem}{Theorem}[section]
\newtheorem{lem}[theorem]{Lemma}
\newtheorem{thm}[theorem]{Theorem}
\newtheorem{rem}[theorem]{Remark}
\newtheorem{defi}[theorem]{Definition}
\newtheorem{ex}[theorem]{Example}
\newcommand{\SX}[1]{\ensuremath{S_{#1}}\xspace}
\newcommand{\Sym}[1]{\ensuremath{Sym(#1)}}
\newcommand{\myfloor}[1]{\left \lfloor #1 \right \rfloor}
\newcommand{\LL}{\mathcal{L}}
\newcommand{\id}{\textrm{id}}
\newcommand{\N}{\mathbb{N}}
\newcommand{\ext}{\operatorname{ext}}
\newcommand{\Min}{\operatorname{Min}}
\newcommand{\Orb}{\operatorname{Orb}}
\newcommand{\Orbcount}{\operatorname{Orbcount}}
\newcommand{\Count}{\operatorname{Count}}
\newcommand{\powset}{\mathcal{P}}
\begin{document}

\begin{frontmatter}

\title{Minimal and canonical images}

\author{Christopher Jefferson}
\ead{caj21@st-andrews.ac.uk}
\ead[url]{http://caj.host.cs.st-andrews.ac.uk/}

\author{Eliza Jonauskyte}
\ead{ej31@st-andrews.ac.uk}

\author{Markus Pfeiffer}
\address{University of St~Andrews\\School of Computer Science\\North Haugh\\St Andrews\\KY16 9SX\\Scotland}
\ead{markus.pfeiffer@st-andrews.ac.uk}
\ead[url]{https://www.morphism.de/~markusp/}

\author{Rebecca Waldecker}
\address{Martin-Luther-Universit\"at Halle-Wittenberg\\Institut f\"ur Mathematik\\06099 Halle\\Germany}
\ead{rebecca.waldecker@mathematik.uni-halle.de}
\ead[url]{http://conway1.mathematik.uni-halle.de/~waldecker/index-english.html}

\journal{Journal of Algebra}

\begin{abstract}
We describe a family of new algorithms for finding the canonical image of a set
of points under the action of a permutation group. This family of algorithms
makes use of the orbit structure of the group, and a chain of subgroups of the
group, to efficiently reduce the amount of search that must be performed to
find a canonical image.

We present a formal proof of correctness of our algorithms and describe experiments
on different permutation groups that compare our algorithms with the previous
state of the art.
\end{abstract}

\begin{keyword}
  Minimal Images, Canonical Images, Computation, Group Theory, Permutation Groups.
\end{keyword}
\end{frontmatter}


\section{Background}

Many combinatorial and group theoretical problems
\cite{DBLP:journals/combinatorics/Soicher99,gent2000symmetry,Distler2012} are
equivalent to finding, given a group \(G\) that acts on a finite set \(\Omega\)
and a subset \(X \subseteq \Omega\), a partition of \(X\) into subsets that are
in the same orbit of \(G\).

We can solve such problems by taking two elements of \(X\) and searching for an element of
\(G\) that maps one to the other. However, this  requires a possible \(O(|X|^2)\) checks,
if all elements of \(X\) are in different orbits.

Given a group \(G\) acting on a set \(\Omega\), a canonical labelling function
 maps each element of \(\Omega\)
to a distinguished element of its orbit under \(G\). Using a canonical labelling function we can check if
two members of \(\Omega\) are in the same orbit by applying the canonical labelling function to both
and checking if the results are equal. More importantly, we can
solve the problem of partitioning \(X\) into orbit-equivalent subsets by performing \(O(|X|)\)
canonical image calculations. Once we have the canonical image of each element, we can organize
the canonical images into equivalence classes by sorting in \(O(|X|log(|X|))\) comparisons, or expected \(O(|X|)\)
time by placing them into a hash table. This is because checking
if two elements are in the same equivalence class is equivalent to checking if their canonical images
are equal.

The canonical image problem has a long history.
Jeffrey Leon \cite{Leon} discusses three types of problems on permutation groups
-- subgroup-type problems (finding the intersection of several
groups), coset-type problems (deciding whether or not the intersection of a
series of cosets is empty, and if not, finding their intersection) and
canonical-representative-type problems. He claims to have an algorithm to efficiently
solve the canonical-representative problem, but does not discuss it further. His comments
have inspired mathematicians and computer scientists
to work on questions related to minimal images and canonical images.

One of the most well-studied canonical-image problems is the canonical graph
problem. Current practical systems derive from partition refinement
techniques, which were first practically used for graph automorphisms by McKay
\cite{McKay80} in the \texttt{Nauty} system. There have been a series of
improvements to this technique, including \texttt{Saucy} \cite{Saucy},
\texttt{Bliss} \cite{JunttilaKaski:ALENEX2007} and \texttt{Traces} \cite{McKay201494}.
A comparison of these systems can be found in \cite{McKay201494}.

We cannot, however, directly apply the existing work for graph isomorphism to
finding canonical images in arbitrary groups. The reason is that McKay's Graph Isomorphism algorithm only considers finding the canonical image of a graph
under the action of the full symmetric group on the set of vertices. Many
applications require finding canonical images under the action of subgroups of
the full symmetric group.

One example of a canonical labelling function is, given a total ordering on \(X\), to
map each value of \(X\) to the smallest element in its orbit under \(G\). This
\emph{Minimal image problem} has been treated by Linton in
\cite{Linton:SmallestImage}. Pech and Reichard \cite{Pech2009} apply techniques
similar to Linton's to enumerate orbit representatives of subsets of $\Omega$
under the action of a permutation group on $\Omega$.
Linton gives a practical algorithm for finding the smallest image of a set under
the action of a given permutation group. Our new algorithm, inspired by Linton's
work, is designed to find canonical images: we extend and generalize Linton's
technique using a new orbit-based counting technique. In this paper we first
introduce some notation and explain the concepts that go into the algorithm,
then we prove the necessary results and finish with experiments that demonstrate
how this new algorithm is superior to the previously published techniques. 

\section{Minimal and Canonical Images}

Throughout this paper, $\Omega$ will be a finite set, $G$ a subgroup of
$\Sym{\Omega}$, and $\Omega$ will be ordered by some (not necessarily total)
order $\leq$.
If $\alpha \in \Omega$, then we denote the orbit of $\alpha$ under $G$ by $\alpha^G$.
Simlarly, if \(A \subseteq \Omega\) and $g \in G$, then 
\(A^g := \{a^g \mid a \in A\}\) and \(A^G := \{ A^g \mid g \in G \}\).

In this paper, we want to efficiently solve the problem of deciding, given two subsets
\(A,B \subseteq \Omega\), if \(A \in B^G\). We do this by defining a canonical image:

\begin{defi}
  A \textbf{canonical labelling function} $C$ for the action of $G$ on a set $\Omega$ is a function
  $C:\powset(\Omega) \rightarrow \powset(\Omega)$ such that, for all $A  \subseteq \Omega$,
  it is true that
  \begin{itemize}
  \item $C(A) \in A^G$, and
  \item ${C(A^g)} = C(A)$ for all $g \in G$.
  \end{itemize}

  In this situation we call $C(A)$ the \textbf{canonical image} of
  $A \subseteq  \Omega$ (with respect to $G$ in this particular action).

  Further, we say that $g_A \in G$ is a \textbf{canonizing element} for \(A\) if and only if
  $A^{g_A} = C(A)$.
\end{defi}

A canonical image can be seen as a well-defined
representative of a $G$-orbit on $\Omega$ with respect to the defined action. While in this
paper we will only consider the action of \(G\) on a set of subsets of \(\Omega\), canonical images are
defined similarly for any group and action.
In practice we want to be able to find canonical images effectively and
efficiently.
In some situations we are interested in computing the canonizing element,
which might not be uniquely determined. Our algorithms will always produce a
canonizing element as a byproduct of search.
We choose to make this explicit here to make the exposition clearer.

Minimal images are a special type of canonical image.

\begin{rem}
  Suppose that $\preccurlyeq$ is a partial order on $\Omega$ such that any
  two elements in the same orbit can be compared by $\preccurlyeq$.

   Let $\Min_\preccurlyeq$ denote the function that, for all $\omega \in \Omega$, maps $\omega$ to the
  smallest element in its orbit. Then $\Min_\preccurlyeq$ is a canonical labelling function.
\end{rem}

In practical applications we are interested in more structure, namely in structures
that $G$ can act on naturally via the action on a given set $\Omega$. These
structures include subsets of $\Omega$, graphs with vertex set $\Omega$, sets of
maps with domain or range $\Omega$, and so on.

In this paper, our main application will be finding canonical images when
acting on a set of subsets of $\Omega$.

\begin{defi}\label{metaorder}
  Suppose that $\leq$ is a total order of $\Omega$.
  Then we introduce a total order $\preccurlyeq$ on $\powset(\Omega)$ as follows:

  We say that $A$ \textbf{is less than} $B$ and write $A \preccurlyeq B$ if and only if $A$
  contains an element $a$ such that $a \notin B$ and $a \leq b$ for all
  $b \in B \setminus A$.
\end{defi}

\begin{ex}
  Let $\Omega:=\{1,2,3,4,5,6,7\}$ with the natural order and let $A:=\{1,3,4\}$,
  $B:=\{3,5,7\}$, $C:=\{3,6,7\}$, $D:=\{1,3\}$ and $E:=\{2\}$.

  Now $A \preccurlyeq B$, because $1 \in A$, $1 \notin B$ and $1$ is smaller
  than all the elements in $B$, in particular those not in $A$. Moreover $A \preccurlyeq C$ for the same reason. Furthermore, $B \preccurlyeq C$,
  because $5 \in B$, $5 \notin C$, and if we look at $C \setminus B$, then
  this only contains the element $6$ and $5$ is smaller.

  Next we consider $A$ and $D$. As $4 \in A \setminus D$ and $D\setminus A=\varnothing$, we see that $A \preccurlyeq D$. Also $A \preccurlyeq E$ because $1 \in A \setminus E$ and $1$ is smaller than all elements in $E \setminus A=E$.
  Finally $E \preccurlyeq B$ because $2 \in E \setminus B=E$ and $2$ is smaller than all elements in $B \setminus E=B$.
\end{ex}

\begin{rem}
The example illustrates that this new order introduced above reduces to lexicographical order for sets of the same size. But for sets of different sizes, it might seem counter-intuitive. Our reason for choosing this different ordering is that it satisfies the
following property:

If \(n \in \N\) and if \(A\) and \(B\) are sets of integers, then \(A \cap \{1,\dots,n\} < B \cap \{1,\dots,n\}\) implies \(A < B\). This means that, when building \(A\) and \(B\) incrementally, we know the order of \(A\) and \(B\) as soon as we find the first integer that is contained in one of the sets but not in the other. This is not true for lexicographic ordering of sets, as \(\{1\} < \{1,2\}\) but \(\{1,1000\} > \{1,2,1000\}\).
\end{rem}

If $G$ is a subgroup of $\Sym{\Omega}$ and $\omega \in \Omega$, then we denote by
$G_\omega$ the point stabilizer of $\omega$ in $G$. For distinct elements
$x,y \in \Omega$, we denote by \(G_{x \mapsto y}\)  the set of all elements of
$G$ that map $x$ to $y$. This set may be empty.

We remark that the above information is readily available from a stabilizer chain
for the group $G$, which can be calculated efficiently. For further details
we refer the reader to \cite{HCG}. We now introduce some notation and then prove a basic result about cosets.

\begin{defi}
 Let $G$ be a permutation group acting on a totally ordered set
 $(\Omega, \leq)$, and let $\preccurlyeq$ denote the induced ordering as explained in Definition \ref{metaorder}.
Let $H$ be a subgroup of $G$ and $S \subseteq \Omega$.
Then we define
  \textbf{the minimal image of $S$ under $H$} to be the smallest element in the set
  $\{S^h \mid h \in H\}$ with respect to $\preccurlyeq$.

In order to simplify notation, we will from now on write $\leq$ for the induced order and then we write $\Min(H,S,\leq)$ for the minimal image of $S$ under $H$.
\end{defi}

\begin{lem}\label{miniprop}
  Let $G$ be a permutation group acting on a totally ordered set
  $(\Omega, \leq)$, and let $H$ be a subgroup of $G$ and $S \subseteq \Omega$.
  Then the following hold for all $x,y \in \Omega$:

\begin{enumerate}

\item
  For all $\sigma \in H_{x \mapsto y}$ it is true that
  $\sigma \cdot H_y=H_{x\mapsto y}=H_x \cdot \sigma$.

\item
  If $\sigma \in H_{x \mapsto y}$, then
  \(\Min(\sigma \cdot H, S, \leq) = \Min(H, S^{\sigma}, \leq)\).
\end{enumerate}
\end{lem}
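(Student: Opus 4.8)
The plan is to treat the two statements separately, since the second does not rely on the first; both are elementary consequences of how the group action interacts with cosets, so I expect the main care to lie in keeping the right-action (exponent) convention straight rather than in any genuine difficulty.

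For (1), I would establish the two equalities $\sigma \cdot H_y = H_{x \mapsto y}$ and $H_x \cdot \sigma = H_{x \mapsto y}$ by double inclusion. For the first inclusion, given $h \in H_y$ I compute $x^{\sigma h} = (x^\sigma)^h = y^h = y$, so $\sigma h \in H_{x \mapsto y}$, whence $\sigma H_y \subseteq H_{x \mapsto y}$. Conversely, for $\tau \in H_{x \mapsto y}$ I set $h := \sigma^{-1}\tau$ and check $y^h = (y^{\sigma^{-1}})^{\tau} = x^{\tau} = y$, using that $x^\sigma = y$ gives $y^{\sigma^{-1}} = x$; thus $h \in H_y$ and $\tau = \sigma h \in \sigma H_y$. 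The equality $H_x \cdot \sigma = H_{x \mapsto y}$ follows by the symmetric computation, this time putting $h := \tau \sigma^{-1}$ and verifying $x^h = x$.

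For (2), the key observation is that the coset $\sigma H$ equals $\{\sigma h \mid h \in H\}$ and that the action on $\powset(\Omega)$ is compatible with composition, i.e. $S^{\sigma h} = (S^\sigma)^h$. Hence the two sets over which the respective minima are taken coincide:
\[
  \{S^g \mid g \in \sigma H\} = \{S^{\sigma h} \mid h \in H\} = \{(S^\sigma)^h \mid h \in H\}.
\]
The left-hand set is exactly the one defining $\Min(\sigma \cdot H, S, \leq)$, and the right-hand set is the one defining $\Min(H, S^{\sigma}, \leq)$; since they are equal as subsets of $\powset(\Omega)$, their smallest elements with respect to the induced order agree, which is the claimed identity.

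The only supporting remarks I would make explicit are that the induced order $\preccurlyeq$ of Definition \ref{metaorder} is total, so that each of the finite nonempty sets above has a unique smallest element and $\Min$ is well-defined on cosets just as on subgroups, and that $S^{\sigma h} = (S^\sigma)^h$ is immediate from the definition $A^g = \{a^g \mid a \in A\}$. I do not anticipate a substantive obstacle here; the whole argument is careful bookkeeping with the action convention.
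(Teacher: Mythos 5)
Your proof is correct and takes essentially the same approach as the paper: part (ii) is the identical observation that both minima are taken over the very same set $\{(S^\sigma)^h \mid h \in H\} = \{S^{\sigma h} \mid h \in H\}$, and part (i) is the same elementary coset computation. The only difference is one of completeness, in your favour: you verify both inclusions for $\sigma \cdot H_y = H_{x \mapsto y} = H_x \cdot \sigma$ explicitly, whereas the paper checks only the forward inclusions and leaves the reverse direction to a terse appeal to bijectivity of multiplication by $\sigma$.
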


\begin{proof}
  If $\sigma \in H_{x \mapsto y}$, then multiplication by $\sigma$ from the
  right or left is a bijection on $H$, respectively. For all $\alpha \in H_x$ we
  have that $\alpha \cdot \sigma$ maps $x$ to $y$ and for all $\beta \in H_y$ we
  see that $\sigma \cdot \beta$ also maps $x$ to $y$. This implies the first
  statement.

  For (ii) we just look at the definition: $\Min(\sigma \cdot H, S, \leq)$ denotes
  the smallest element in the
  set $\{S^{\sigma\cdot h} \mid h \in H\}$ and $\Min(H, S^{\sigma}, \leq)$
  denotes the smallest element in the set $\{(S^\sigma)^h \mid h \in H\}$,
  which is the same set.
\end{proof}

\subsection{Worked Example}

We will find minimal, and later canonical, images using similar techniques to
Linton in \cite{Linton:SmallestImage}. This algorithm splits the problem into
small sub-problems, by splitting a group into the cosets of a point stabilizer.
We will begin by demonstrating this general technique with a worked
example.

\begin{ex}\label{ex:minimal}
  In the following example we will look at $\Omega = \{1,2,3,4,5,6\}$,
  the subgroup {$G = \langle (14)(23)(56), (126)\rangle \leq \SX{6}$},
  and $S = \{2,3,5\}$.
  We intend to find the minimal image $\Min(G,S, \leq)$, where the ordering on
  subsets of $\Omega$ is the induced ordering from $\leq$ on $\Omega$ as explained in
  Definition \ref{metaorder}.

  We split our problem into pieces by looking at cosets of \(G_1 = \langle (3,4,5)
  \rangle\). The minimal image of \(S\) under \(G\) will be realized by an
  element contained in (at least) one of the cosets of \(G_1\), so if we find
  the minimal image of \(S\) under elements in each coset, and then take the
  minimum of these, we will find the global minimum.

  Lemma \ref{miniprop} gives that, for all $g \in G$, it holds that $\Min(g \cdot
  G_1, S, \leq) = \Min(G_1, S^g, \leq)$, and
  so we can change our problem from looking for the minimal image of $S$ with
  respect to cosets of \(G_1\) to looking at images of \(S^g\) under elements of
  $G_1$ where $g$ runs over a set of coset representatives of $G_1$ in $G$.

  For each $i \in \{1,\dots,6\}$ we need an element
  $g_i \in G_{i \mapsto 1}$ (where any exist), so that we can then consider $S^{g_i}$.

  We choose the elements $\id$, $(162)$, $(146523)$, $(14)(23)(56)$, $(142365)$
  and $(126)$ and obtain six images of $S$:
  \[
    \{2,3,5\}, \{1,3,5\}, \{3,1,2\}, \{3,2,6\}, \{3,6,1\}, \{6,3,5\}.
  \]

  As we are looking at the images of these sets under \(G_1\), we
  know that all images of a set containing \(1\) will contain \(1\),
  and all images of a set not containing \(1\) will not contain \(1\).
  From Definition \ref{metaorder}, all subsets of \(\{1,\dots,6\}\)
  containing \(1\) are smaller than all subsets not containing \(1\). This means
  that we can filter our list down to $\{1,3,5\}, \{3,1,2\}$ and $\{3,6,1\}$.

  Furthermore, \(G_1\) fixes \(2\), so by the same argument we can filter our
  list of sets not containing \(2\), leaving only \(\{3,1,2\}\). The minimal image
  of this under \(G_1\) is clearly \(\{3,1,2\}\) (in this particular
  case we could of course also have stopped as soon as we saw \(\{3,1,2\}\), as
  this is the smallest possible set of size 3).

  Now, let us consider what would happen if the ordering of the integers was
  reversed, so we are looking for \(\Min(G, S, \geq)\), again with the
  induced ordering.

  For the same reasons as above, we begin by calculating
  $G_6 = \langle (3,5,4) \rangle$ and by finding images of \(S\) for some
  element from each coset of \(G_6\) in $G$.

  An example of six images is
  \[
    \{ 1, 5, 4 \}, \{ 6, 5, 3 \}, \{ 4, 6, 1 \}, \{ 5, 1, 2 \}, \{ 3, 2, 6 \},
    \{ 2, 3, 5 \}.
  \]
  We can ignore anything that does not contain \(6\), so we are left with:

  \[
    \{ 6, 5, 3 \}, \{ 4, 6, 1 \}, \{ 3, 2, 6 \}.
  \]

  As \(5\) is not fixed by \(G_6\), we can not reason about the presence
  or absence of \(5\) in our sets. There is an image of every set that
  contains \(5\), and there are even two distinct images of \(\{6,5,3\}\) that
  contain \(5\). Therefore we must continue our search by considering \(G_{6,5}\).

  Application of an element from each coset of $G_{6,5}$ to $S$ generates nine
  sets, of which four contain the element $5$. In fact we reach \(\{6,3,5\},
  \{6,5,4\}\) from the set $\{ 6, 4, 3 \}$, we reach \(\{5,6,1\}\) from the set
  $\{ 4, 6, 1 \}$ and we reach \(\{5,2,6\}\) from the set $\{ 3, 2, 6 \}$. From
  these we extract the minimal image \(\{6,5,4\}\).

  In this example, different orderings of \(\{1,2,3,4,5,6\}\) produced different sized
  searches, with different numbers of levels of search required.
\end{ex}

\section{Minimal Images under alternative orderings of \(\Omega\)}
\label{sec:alternate ordering}
As was demonstrated in Example \ref{ex:minimal}, the choice of ordering of the set our group acts on
influences the size of the search for a minimal image. In this section we will show
how to create
orderings of $\Omega$ that, on average, reduce the size of search for a minimal image.

We begin by showing how large a difference different orderings can make. We do
this by proving that, for any choice $\preccurlyeq$ of ordering of \(\Omega\), group \(G\) and
any input set $S$, we can construct a minimal image
problem that is as hard as finding \(\Min(G,S,\preccurlyeq)\), but where
reversing the ordering on $\Omega$ makes the problem trivial.

We make this more precise: Given $n \in \N$, a permutation group \(G\) on \(\{1,\dots,n\}\) with some ordering $\leq$ and a subset \(S
\subseteq \{1,\dots,n\}\), we construct a group \(H\) and a set \(T\) such that
\(\Min(G, S, \leq) = \Min(H,T, \leq) \cap \{1..n\}\), which shows that finding
\(\Min(H,T,\leq)\) is at least as hard as finding \(\Min(G,S,\leq)\). On the
other hand, we will show that \(\Min(H,T, \geq) = T\) and that this can be deduced without search.
This is done in Lemma \ref{ex}.
An example along the way will illustrate the construction.

\begin{defi}
We fix $n \in \N$ and we let $k \in \N$. For all $j \in \N$ we define $q(j) \in
\N$ (where $q$ stands for ``quotient'') and $r(j) \in \{1,\dots,n\}$ (where $r$ stands for ``remainder'') such that $j=q(j) \cdot n+r(j)$.

Let $\ext : G \rightarrow \SX{k \cdot n}$ be the following map:
For all $g \in G$ and all $j \in \{1,\dots,k \cdot n\}$, the element
$\ext(g)$ maps $j$ to $q(j) \cdot n+r(j)^g$.
\end{defi}

\begin{ex}
Let $n=4$ and $G=\SX{4}$. Then we extend the action of $G$ to the set
$\{1,\dots,12\}$ using the map $\ext$.

For example $g=(134)$ maps $4$ to $1$.
We write $12=2 \cdot 4+4$ and then it follows that
$\ext(g)$ maps $12$ to $2 \cdot 4 +4^g=8+1=9$.
In fact $g$ acts simultaneously on the three tuples $(1,2,3,4)$, $(5,6,7,8)$
and $(9,10,11,12)$ as it does on $(1,2,3,4)$.
\end{ex}

\begin{defi}
Fixing $n,k \in \N$ and a subgroup $G$ of $\SX{n}$, and using the map $\ext$
defined above, we say that \textbf{$H$ is the extension of $G$ on
  $\{1,\dots,k\cdot n\}$}
if and only if $H=\{\ext(g) \mid g \in G\}$ is the image of $G$
under the map $\ext$.
\end{defi}

The extension $H$ of $G$ on a set $\{1,\dots,k \cdot n\}$
is a subset of $\SX{k \cdot n}$. We show now that even more is true:

\begin{lem}\label{ext}
Let $n,k \in \N$ and $G \le \SX{n}$. Then the extension of $G$ onto $\{1,\dots,k \cdot n\}$ is a subgroup of $\SX{k \cdot n}$ that is isomorphic to $G$.
\end{lem}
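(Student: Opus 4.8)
The plan is to show that $\ext$ is an injective group homomorphism from $G$ into $\SX{k \cdot n}$; since the image of a group under an injective homomorphism is a subgroup isomorphic to the domain, this immediately gives both claims at once. First I would verify that $\ext(g)$ is genuinely a permutation of $\{1,\dots,k\cdot n\}$ for each $g \in G$: writing $j = q(j)\cdot n + r(j)$ with $r(j) \in \{1,\dots,n\}$, the map sends $j$ to $q(j)\cdot n + r(j)^g$, which leaves the ``block index'' $q(j)$ unchanged and permutes the ``remainder'' $r(j)$ within each block according to $g$. Since $g$ is a bijection on $\{1,\dots,n\}$ and the $k$ blocks $\{1,\dots,n\}, \{n+1,\dots,2n\},\dots$ partition $\{1,\dots,k\cdot n\}$, the map $\ext(g)$ is a bijection, i.e.\ an element of $\SX{k\cdot n}$.

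Next I would establish the homomorphism property, $\ext(g \cdot h) = \ext(g)\cdot \ext(h)$ for all $g,h \in G$. The key observation is that applying $\ext(g)$ to $j$ does not change $q(j)$, so the remainder of $j^{\ext(g)}$ is exactly $r(j)^g$ and its quotient is again $q(j)$. Applying $\ext(h)$ afterwards therefore sends $q(j)\cdot n + r(j)^g$ to $q(j)\cdot n + (r(j)^g)^h = q(j)\cdot n + r(j)^{gh}$, which is precisely $j^{\ext(gh)}$. This computation, together with the fact that $\ext(\id)$ fixes every $j$ (since $r(j)^{\id} = r(j)$), shows $\ext$ is a homomorphism.

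Finally I would check injectivity. If $\ext(g) = \ext(h)$, then in particular $\ext(g)$ and $\ext(h)$ agree on the first block $\{1,\dots,n\}$, where $q(j) = 0$ and $r(j) = j$, so $j^g = j^h$ for all $j \in \{1,\dots,n\}$; hence $g = h$. Thus $\ext$ is an injective homomorphism, its image $H$ is a subgroup of $\SX{k\cdot n}$, and $\ext$ restricts to an isomorphism $G \cong H$.

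None of the individual steps is deep, so I do not expect a genuine obstacle; the main thing to be careful about is bookkeeping with the quotient--remainder decomposition, specifically confirming that $\ext(g)$ preserves $q(j)$ so that the remainders compose correctly under successive applications. Getting the convention $r(j) \in \{1,\dots,n\}$ right (rather than $\{0,\dots,n-1\}$) is the one place where a sign-off on the indexing is needed, but it affects only notation, not the substance of the argument.
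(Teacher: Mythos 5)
Your proposal is correct and takes essentially the same route as the paper: both establish that $\ext$ is an injective group homomorphism (via the same quotient--remainder bookkeeping, using that $r(j)^g \in \{1,\dots,n\}$ so the block index is preserved) and conclude that the image is a subgroup of $\SX{k \cdot n}$ isomorphic to $G$. The only difference is cosmetic: you additionally verify that $\ext(g)$ is a bijection of $\{1,\dots,k\cdot n\}$ (which the paper absorbs into the definition of $\ext$ as a map into $\SX{k\cdot n}$), and you phrase injectivity via agreement on the first block rather than the paper's contrapositive form.
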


\begin{proof} Let $H:=\ext(G)$ be the image of $G$ under the map $\ext$ and let
$a,b \in G$ be distinct. Then let $j \in \{1,\dots,n\}$ be such that $j^a \neq
j^b$.
By definition $\ext(a)$ and $\ext(b)$ map $j$ in the same way that $a$ and $b$
do, so we see that $\ext(a) \neq \ext(b)$. Hence the map $\ext$ is injective.
Therefore $\ext:G \rightarrow H$ is bijective.

Next we let $a,b \in G$ be arbitrary and we let $j \in \{1,\dots,k \cdot n\}$.
Then the composition $ab$ is mapped to $\ext(ab)$, which maps $j$ to $q(j) \cdot
n+r(j)^{ab}$. Now $r(j)^{ab}=(r(j)^a)^b$ and therefore the composition
$\ext(a)\ext(b) \in \SX{k \cdot n}$ maps $j$ to $(q(j) \cdot
n+r(j)^a)^{\ext(b)}=q(j) \cdot n + (r(j)^a)^b$. This is because $r(j)^a \in
\{1,\dots,n\}$.

Hence $\ext(ab)=\ext(a)\ext(b)$. That implies $\ext$ is a group homomorphism
and hence that $G$ and its image are isomorphic.
\end{proof}

\begin{lem}\label{ex}
Let $n \in \N$ and $G \le \SX{n}$. Let $H$ denote the extension of $G$ on
$\{1,\dots,(n+1) \cdot n\}$ and let $S \subseteq \{1,\dots,n\}$. Let
$T:=S \cup \{ l \cdot n + l \mid l \in \{1..n\}\}$, let \(\leq\) denote the natural
ordering of the integers, and let \(\geq\) denote its reverse. For simplicity we use the same symbols for the ordering induced on $\powset(\Omega)$, respectively. Then

\begin{itemize}
\item \(\Min(H,T, \le) \cap \{1,\dots,n\} = \Min(G,S, \le)\).
\item \(\Min(H,T,\ge)=T\).
\end{itemize}
\end{lem}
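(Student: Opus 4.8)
The plan is to analyze the image $T^h$ for an arbitrary $h=\ext(g)\in H$ directly, exploiting the block structure of the extended action. Since $\ext(g)$ preserves each block $\{qn+1,\dots,qn+n\}$ setwise and permutes the positions inside a block exactly as $g$ does, I would first record the decomposition $T^{\ext(g)}=S^g\cup\{\,l\cdot n+l^g\mid l\in\{1,\dots,n\}\,\}$, where $S^g\subseteq\{1,\dots,n\}$ sits in block $0$ and each $l\cdot n+l^g$ lies in block $l$. The crucial structural observation is that every diagonal element exceeds $n$, so $T^{\ext(g)}\cap\{1,\dots,n\}=S^g$ for every $g$; and as $g$ ranges over $G$ this block-$0$ part ranges exactly over $S^G$, using Lemma~\ref{ext} to identify $H$ with $G$.

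For the first claim I would invoke the order property recorded in the remark after Definition~\ref{metaorder}: if $A\cap\{1,\dots,n\}<B\cap\{1,\dots,n\}$, then $A<B$. Let $g_0\in G$ realize $\Min(G,S,\le)$, so that $T^{\ext(g_0)}\cap\{1,\dots,n\}=\Min(G,S,\le)$. For any $g$ with $S^g\neq\Min(G,S,\le)$ we have $\Min(G,S,\le)<S^g$, and the cited property then forces $T^{\ext(g_0)}<T^{\ext(g)}$. Hence no image whose block-$0$ part strictly exceeds $\Min(G,S,\le)$ can be minimal, so the minimal image has block-$0$ part exactly $\Min(G,S,\le)$, which is the first identity.

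For the second claim I would show that $T$ is already $\ge$-minimal among its images by a largest-moved-point argument. Fix $g\neq\id$ (the identity case being trivial) and let $l_0$ be the largest element of $\{1,\dots,n\}$ with $l_0^g\neq l_0$. First I would establish $l_0^g<l_0$: if $l_0^g>l_0$, then $l_0^g$ would be fixed by $g$ by maximality of $l_0$, so both $l_0$ and $l_0^g$ map to $l_0^g$, contradicting injectivity since $l_0\neq l_0^g$. Because $S^g$ lives in block $0$ and the diagonal entries in blocks above $l_0$ are fixed by $g$, the blocks strictly above $l_0$ contribute nothing to the symmetric difference $T\triangle T^{\ext(g)}$; within block $l_0$ the only entries are the diagonal points $l_0 n+l_0\in T$ and $l_0 n+l_0^g\in T^{\ext(g)}$. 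Since $l_0^g<l_0$, the numerically largest element of $T\triangle T^{\ext(g)}$ is $l_0 n+l_0$, and it lies in $T$. Reading Definition~\ref{metaorder} with the reversed base order $\ge$, this is precisely the statement $T\preccurlyeq T^{\ext(g)}$, so $T$ is minimal and $\Min(H,T,\ge)=T$.

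The main obstacle I anticipate is the bookkeeping for the second claim: one must correctly translate the induced reverse order on $\powset(\Omega)$ into the condition that the numerically largest element of the symmetric difference lies in $T$, and then verify both that nothing in the blocks above $l_0$ (including the translates of $S$) survives into $T\triangle T^{\ext(g)}$ and that $l_0^g<l_0$ rather than $l_0^g>l_0$. The first claim is comparatively routine once the block-$0$ reduction and the quoted order property are in place.
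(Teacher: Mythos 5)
Your proof is correct, and both halves hold up under scrutiny. On the first claim you take essentially the paper's route --- both proofs exploit that every $h = \ext(g) \in H$ preserves the blocks and acts on block $0$ exactly as $g$ does --- though you are more explicit at the one step that needs care: the paper simply asserts the final equality $\min_{\le}\{S^{h} \mid h \in H\} \cap \{1,\dots,n\} = \Min(H,T,\le) \cap \{1,\dots,n\}$ in its chain of identities, whereas you justify it by the order property recorded after Definition~\ref{metaorder} (if $A \cap \{1,\dots,n\} < B \cap \{1,\dots,n\}$ then $A < B$), which is indeed what makes the block-$0$ comparison decide the comparison of the full images. On the second claim your route is genuinely different. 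The paper reasons greedily about the minimizer: any $h$ with $T^{h} = \Min(H,T,\ge)$ must fix $n(n+1)$, hence every multiple of $n$, hence $n^{2}-1$, and ``arguing as above'' every point, so the minimizer is the identity and the minimum is $T$; this sequential fixing is also what supports the paper's closing remark that an algorithm stepping through $T$ in decreasing order finds the minimum without any branching search. You instead compare $T$ with $T^{\ext(g)}$ in one shot for an arbitrary $g \neq \id$: letting $l_{0}$ be the largest point moved by $g$, you correctly deduce $l_{0}^{g} < l_{0}$, observe that blocks above $l_{0}$ contribute nothing to the symmetric difference, and conclude that its numerically largest element is $l_{0}n + l_{0} \in T$, i.e.\ $T \prec T^{\ext(g)}$ in the reversed induced order. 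Your one-shot comparison is more self-contained and rigorous than the paper's informal induction, at the modest cost of not exhibiting the ``no branching needed'' feature of the search that the paper's greedy argument makes visible and that motivates the lemma in its surrounding section.
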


\begin{proof}
Let $h \in H$. Then by construction $h$ stabilizes the partition
$$[1,\dots,n|n+1,\dots,2n|\dots|n \cdot n,\dots,(n+1)\cdot n].$$ 
Moreover, for all $i \in \{1,\dots,n\}$ and $g \in G$ we have that $i^g=i^{\ext(g)}$
and so Lemma \ref{ext} implies that

\begin{align*}
  \Min(G,S,\le) &= \min_{\le}\{S^g \mid g \in G\}  \cap \{1,\dots,n\}\\
                &= \min_{\le}\{S^{\ext(g)} \mid g \in G\}  \cap \{1,\dots,n\}\\
                &= \min_{\le}\{S^h \mid h \in H\}  \cap \{1,\dots,n\}\\
                &= \Min(H,T, \le) \cap \{1,\dots,n\}.
\end{align*}
This proves the first statement.

For the second statement we notice that $(n + 1)\cdot n$ is now the smallest element
of $T$, and it cannot be mapped to anything smaller, because it also is the
smallest element available.
So if we let $h \in H$ be such that $T^h=\Min(H,T,\ge)$, then $h$ fixes the point $n(n + 1)$. By
definition of the extension, it follows that $h$ also fixes $k \cdot n$ for all
$k \in \{1\ldots n\}$.
The next point of $T$ under the ordering is $n^2 - 1$. It cannot be
mapped by $h$ to $n^2$, because $n^2$ is already fixed, hence $h$ has to fix
$n^2 - 1$, too.

Arguing as above it follows that all points are fixed by $h$, thus in
particular \(\Min(H,T,\ge)=T\), as stated. Furthermore, any algorithm
that stepped through the elements of \(T\) in the order we describe
would find this smallest element without having to perform a branching search,
as at each step there is no choice on which element of \(T\) is the next smallest.
\end{proof}

\subsection{Comparing Minimal Images Cheaply}

We describe some important aspects of Linton's
algorithm for computing the minimal image of a subset of $\Omega$.
\begin{defi}

Suppose that $(\Omega, \le)$ is a totally ordered set and that $G \le
\Sym{\Omega}$. Then \(\Orb(G)\) denotes the list of orbits of $G$ on $\Omega$.
This list of orbits is ordered with respect to the smallest element in each orbit under $\le$.

A $G$-orbit will be called a \textbf{singleton} if and only if it has size $1$.

If \(S \subseteq \Omega\), then we say that a $G$-orbit is \textbf{empty in S}
if and only if it is disjoint from $S$ as a set, and we say that it is
\textbf{full in S} if and only if it is completely contained in $S$.
\end{defi}

\begin{ex}
  Let \(\Omega:=\{1,\dots,8\}\), with the natural ordering on the integers, let
  \(G := \langle (1,4), (2,8), (5,6), (7,8) \rangle\) and let $S:=\{1,3,5,6\}$.

  Then \(\Orb(G) = [ \{1,4\}, \{2,7,8\}, \{3\}, \{5,6\} ]\) because this list
  contains all the $G$-orbits and they are ordered by the smallest element in each
  orbit, namely $1$ in the first, $2$ in the second, $3$ in the third, which is a
  singleton, and $5$ in the last (because $4$ is already in an earlier orbit).

  The orbits $\{3\}$ and $\{5,6\}$ are full in $S$, the orbit $\{2,7,8\}$ is
  empty in $S$ and $\{1,4\}$ is neither.
\end{ex}

\begin{lem}\label{same}
Suppose that $(\Omega, \le)$ is a totally ordered finite set and that $G \le
\Sym{\Omega}$. If
$\Min(G,S,\le)=\Min(G,T,\le)$ and $\omega \in \Omega$, then $\omega^G$ is empty in $S$ if
and only if it is empty in $T$, and $\omega^G$ is full in $S$ if and only if it is
full in $T$.
\end{lem}

\begin{proof}
  Let $\omega \in \Omega$ and suppose that $\omega^G$ is empty in $S$. As $\omega^G$
  is closed under the action of $G$, and $S_0 := \Min(G,S,\le)$ is an image
  of $S$ under the action of $G$, we see that $\omega^G$ is empty in $S_0$ and hence
  in $T_0:=\Min(G,T,\le)$. Thus $\omega^G$ is empty in $T$, which is an image of $T$
  under the action of $G$. The same arguments work vice versa.

  Next we suppose that $\omega^G$ is full in $S$. Then it is full in $S_0=T_0$ and
  hence in $T$, and the same way we see the converse.
\end{proof}

We can now prove Theorem \ref{thm:minsplit}, which provides the main technique
used to reduce search. This allows us to prove that the minimal image of some set \(S\)
will be smaller than or equal to the minimal image of a set \(T\), without explicitly
calculating the minimal image of either \(S\) or \(T\).

\begin{thm}\label{thm:minsplit}
  Suppose that \(G\) is a permutation group on a totally ordered finite set
  \((\Omega, \le)\) and that \(S\) and \(T\)
  are two subsets of \(\Omega\) where \(|S| = |T|\).

  Suppose further that \(o\) is the first orbit in the list $\Orb(G)$ that
  is neither full in both \(S\) and \(T\) nor empty in both \(S\) and \(T\). If
  \(o\) is empty in \(T\), but not in $S$, then $\Min(G,S,\le)$ is strictly
  smaller than $\Min(G,T,\le)$.
\end{thm}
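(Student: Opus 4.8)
The plan is to compare the two minimal images $S_0 := \Min(G,S,\le)$ and $T_0 := \Min(G,T,\le)$ directly, using the fact that each is a $G$-image of $S$ (respectively $T$) together with the way the induced order is decided by the first point of disagreement. First I would record the reformulation of the induced order that makes Definition \ref{metaorder} convenient here: for distinct subsets $A,B \subseteq \Omega$, one has $A \prec B$ if and only if the $\le$-least element on which $A$ and $B$ disagree belongs to $A$; this is a direct reading of Definition \ref{metaorder} through its witness $a$. I would also use the elementary observation underlying Lemma \ref{same}: since every $g \in G$ permutes each orbit setwise, the property of an orbit being full, empty, or merely nonempty in a set is unchanged when the set is replaced by a $G$-image of it. In particular $S_0$ and $S$ have the same full/empty/nonempty pattern on every orbit, and likewise $T_0$ and $T$.

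Next I would exploit the ordering of $\Orb(G)$ by least elements. Writing $m$ for the $\le$-least element of the distinguished orbit $o$, the key structural remark is that every element of $\Omega$ strictly below $m$ lies in an orbit preceding $o$ in $\Orb(G)$: if $\omega < m$, then $\min(\omega^G) \le \omega < m$, so $\omega^G$ comes before $o$. By hypothesis each orbit preceding $o$ is full in both $S$ and $T$ or empty in both, and this status is inherited by $S_0$ and $T_0$; hence $S_0$ and $T_0$ contain exactly the same elements below $m$. Moreover $o$ is empty in $T$, so it is empty in $T_0$ and therefore $m \notin T_0$.

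The heart of the argument is to show that $m \in S_0$, and here I would argue by an exchange/minimality argument. Since $o$ is not empty in $S$, it is not empty in $S_0$, so $S_0$ meets $o$; if $m \notin S_0$, pick $s \in S_0 \cap o$, necessarily with $s > m$, and choose $g \in G$ with $s^g = m$. Because $g$ preserves the full/empty pattern on the orbits preceding $o$, the sets $S_0$ and $S_0^{\,g}$ agree on every element below $m$, while $m \in S_0^{\,g}$ and $m \notin S_0$; by the reformulation of the order this forces $S_0^{\,g} \prec S_0$. But $S_0^{\,g}$ is again a $G$-image of $S$, contradicting the minimality of $S_0 = \Min(G,S,\le)$. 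Hence $m \in S_0$.

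Finally I would assemble these facts: $S_0$ and $T_0$ agree on all elements below $m$, while at $m$ itself we have $m \in S_0$ but $m \notin T_0$. Thus $m$ is the $\le$-least element on which $S_0$ and $T_0$ disagree, and it lies in $S_0$, so the reformulation gives $S_0 \prec T_0$; that is, $\Min(G,S,\le)$ is strictly smaller than $\Min(G,T,\le)$. I expect the only genuinely delicate step to be the claim $m \in S_0$: the exchange element $g$ must be chosen so as not to disturb the agreement already established on the earlier orbits, which is exactly what being full or empty on those orbits guarantees. I do not expect to need the hypothesis $|S| = |T|$ for this implication; it reflects the intended setting in which the comparison is applied.
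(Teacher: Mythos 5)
Your proof is correct, and its skeleton matches the paper's: both arguments use the full/empty status of the orbits preceding $o$ to show that $S_0 := \Min(G,S,\le)$ and $T_0 := \Min(G,T,\le)$ agree on all points below the least element $\alpha$ of $o$, and then decide the comparison at $\alpha$ itself. The genuine difference lies in the decisive final step. The paper picks $g \in G$ with $\alpha \in S^g$ and then asserts ``$S^g = S_0$'', which as written is not justified: an image of $S$ containing $\alpha$ need not be the minimal image. Your exchange argument --- if $\alpha \notin S_0$, map some $s \in S_0 \cap o$ onto $\alpha$ by a suitable $g$, note that $S_0^g$ still agrees with $S_0$ below $\alpha$ because all earlier orbits are full or empty in $S$, and conclude $S_0^g \prec S_0$, contradicting minimality --- supplies exactly the justification the paper skips, namely that $\alpha \in S_0$; alternatively the paper's step could be repaired by observing $S_0 \preccurlyeq S^g \prec T_0$. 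A second difference: by reformulating the induced order as ``the $\le$-least element of the symmetric difference decides, in favour of the set containing it'', you correctly avoid any appeal to $|S| = |T|$; the paper invokes equal cardinality to argue that the smallest differing element determines the comparison, whereas your version shows the conclusion holds without that hypothesis, so your argument is slightly more general as well as more rigorous at the crucial point.
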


\begin{proof}
Suppose that \(o\) is empty in \(T\), but not in \(S\). Then $o$ is empty in
$T_0:=\Min(G,T,\le)$, but not in $S_0:=\Min(G,S,\le)$, and in particular $T_0$
and $S_0$ are distinct, as we have seen in Lemma \ref{same}.

Let $\alpha$ denote the minimum of the orbit $o$ with respect to $\le$ and let $\omega \in \Omega$. If $\omega < \alpha$,
then
$\omega \notin o$, so the the orbit $\omega^G$ appears in the list $\Orb(G)$
\emph{before} $o$ does. Then the choice of $o$ implies that one of the following
two cases holds:

\begin{enumerate}
\item \(\omega^G\) is full in both \(S\) and \(T\). In particular, for all $g \in G$ we have that
$\omega \in S^g \cap T^g$.
\item \(\omega^G\) is empty in both \(S\) and \(T\). In particular, for all $g \in G$ we have that
$\omega^G \cap S^g=\varnothing$ and $\omega^G \cap T^g=\varnothing$.
\end{enumerate}

If $S_0$ contains an element $\omega\in\Omega$ such that $\omega < \alpha$, then Case (i)
above holds and $\omega \in T_0$. So $S_0 \cap \{\omega' \in \Omega \mid \omega' < \alpha\}= T_0
\cap \{\omega' \in \Omega \mid \omega < \alpha\}$.

Since $S_0$ and $T_0$ are distinct, they must differ amongst the elements
at least as large as $\alpha$ and, since they have the same cardinality, the smallest
such element determines which of $S_0$ and $T_0$ is smaller.

We recall that $o=\alpha^G$ is empty in $T$ and non-empty in $S$, so there exists
some $g \in G$ such that $\alpha \in S^g$. Then $S^g=S_0$ and $\alpha \notin T_0$, so
$S_0$ is strictly smaller that $T_0$.
\end{proof}

Here is an example how to use Theorem \ref{thm:minsplit}.

\begin{ex}
Let $\Omega:=\{1,\dots,10\}$ with natural ordering, and let
$G:=\langle (12), (45),(56), (89)\rangle$. We consider the sets $S:=\{3,6,7\}$
and $T:=\{3,7,9\}$ and we want to calculate the smallest of \(\Min(G,S)\) and
\(\Min(G,T)\). Hence, we want to know which one is smaller as cheaply as
possible, to avoid
superfluous calculations.

We first list the orbits of $G$:
$[\{1,2\},\{3\},\{4,5,6\},\{7\},\{8,9\},\{10\}]$.

Going through the orbits as listed, we see that the first one is empty in $S$
and $T$, the second one is full in $S$ and $T$, and the third one gives a
difference for the first time. It is empty in $T$, but not in $S$, so Theorem
\ref{thm:minsplit} yields that the minimal image of $S$ is strictly smaller than
that of $T$.

\end{ex}

\subsection{Static Orderings of \(\Omega\)}\label{sec:static}

In this section we look at
which total ordering of \(\Omega\) should be used to minimize the amount
of time taken to find minimal images of subsets of $\Omega$.

Given a group \(G\) we will choose an ordering on $\Omega$ such that orbits
with few elements appear as early as possible. In particular, singleton orbits should
appear first.

This is justified by the fact that singleton orbits are always either full or empty. Also,
we would expect smaller orbits to be more likely
to be empty or full than larger orbits. This means that small orbits placed early
in the ordering of \(\Omega\) are
more likely to lead to Theorem \ref{thm:minsplit} being applicable, leading to a
reduction in search.

Algorithm~\ref{alg:minorbit} heuristically chooses a new ordering for an ordered
set \(\Omega\), only depending on the group $G$, under the assumption that the
algorithm that computes minimal images will pick a point from a smallest
non-singleton orbit to branch on.
This will not always be true -- in practice Linton's algorithm branches on the
first orbit which contains some point contained in one of the current candidates
for minimal image.

However, we will show that in Section \ref{sec:ex} that
Algorithm~\ref{alg:minorbit} produces substantially smaller,
and therefore faster, searches in practice.

It is not necessary in Line~\ref{line:point2} of Algorithm~\ref{alg:minorbit} to
choose the smallest element of \(\mathit{Points}\), choosing an arbitrary element will,
on average, perform just as well.
By fixing which point is chosen, we ensure that independent implementations will
produce the same ordering and therefore the same canonical image.

\begin{algorithm}
  \caption{FixedMinOrbit}\label{alg:minorbit}
  \begin{algorithmic}[1]
    \Procedure{MinOrbitOrder}{$\Omega, G$}
    \State{\(\mathit{Remain} := \Omega\)}
    \State{\(\mathit{Order} := []\)}
    \State{\(H := G\)}
    \While{\(|\mathit{Remain}| > 0\)}
		\State{\(\mathit{OrbSize} := \Min\left\{|o|\ \mathrel{\big|}  o \in \Orb(H), o \cap \mathit{Remain} \neq \emptyset\right\}\)}\label{line:orb}
		\State{\(\mathit{Points} := \left\{o \mathrel{\big|} o \in \Orb(H), |o| = \mathit{OrbSize},  o \cap \mathit{Remain} \neq \emptyset\right\}\)}\label{line:point}
		\State{\(\mathit{MinPoint} := \Min\left\{x \mid o \in \mathit{Points}, x \in o\right\}\)}\label{line:point2}
		\State{\(\mathit{Remain} := \mathit{Remain} \backslash \{ \mathit{MinPoint} \}\)}
		\State{\(Add(\mathit{Order}, \mathit{MinPoint})\)}
		\State{\(H := G_{\mathit{MinPoint}}\)}
    \EndWhile
    \State\Return{\(\mathit{Order}\)}
    \EndProcedure
  \end{algorithmic}
\end{algorithm}

We will also consider one simple modification of Algorithm \ref{alg:minorbit}, namely
\texttt{FixedMaxOrbit} (which is the same as \texttt{FixedMinOrbit}) with line
  \ref{line:orb} changed to pick orbits of maximum size.

If our intuition about Theorem \ref{thm:minsplit} is correct, then
\texttt{MaxOrbit} should almost
always produce a larger search than \texttt{MinOrbit} or a random ordering
of \(\Omega\).

\subsection{Implementing alternative orderings of \(\Omega\)}

Having calculated an alternative \(Order\) using \textbf{FixedMinOrbit} or \textbf{FixedMaxOrbit},
we could create a version of \textbf{MinimalImage} which accepted an explicit
ordering. However, rather than editing the algorithm, we can instead perform a
pre-processing step, using Lemma~\ref{lem:map}.

\begin{lem}\label{lem:map}
  Consider a group \(G\) that acts on $\Omega = \{1,\dots,n\}$ and a
  permutation \(\sigma \in \Sym{\Omega}\). We define an ordering \(\leq_\sigma\) on
  \(\{1,\dots,n\}\), where for all $x,y \in \Omega$ we have that \(x \leq_\sigma y\) if and only if
  \(x^\sigma \leq y^\sigma\).

  For the induced orderings $\preccurlyeq$ and $\preccurlyeq_\sigma$
  on subsets of $\Omega$ as in Definition \ref{metaorder}
  it holds that
  \[ X \preccurlyeq_\sigma  Y \Leftrightarrow X^\sigma \preccurlyeq Y^\sigma \]
  for all subsets $X$ and $Y$ of $\Omega$, and hence (simplifying notation)
  \[\Min(G,S,\preccurlyeq_\sigma) = \Min(G^\sigma,S^\sigma,\preccurlyeq)^{\sigma^{-1}}.\]
\end{lem}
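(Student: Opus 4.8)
The plan is to prove the two claims in sequence, using the first (the biconditional on the induced orderings) as the essential tool for the second (the formula relating the minima). The first claim is where all the combinatorial work lives; the second will follow by unwinding definitions and applying Lemma \ref{miniprop}.

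First I would establish the biconditional $X \preccurlyeq_\sigma Y \Leftrightarrow X^\sigma \preccurlyeq Y^\sigma$. The cleanest route is to observe that $\sigma$ is an order isomorphism from $(\Omega, \leq_\sigma)$ to $(\Omega, \leq)$: by definition of $\leq_\sigma$, we have $x \leq_\sigma y$ precisely when $x^\sigma \leq y^\sigma$, so applying $\sigma$ carries the $\leq_\sigma$-structure exactly onto the $\leq$-structure. Since the induced order of Definition \ref{metaorder} is built purely from the underlying order on $\Omega$ together with set membership, and since $a \in X \Leftrightarrow a^\sigma \in X^\sigma$, I expect the biconditional to drop out by a careful but routine translation: the witnessing element $a$ for $X \preccurlyeq_\sigma Y$ (an $a \in X \setminus Y$ that is $\leq_\sigma$-below every element of $Y \setminus X$) corresponds under $\sigma$ to a witnessing element $a^\sigma \in X^\sigma \setminus Y^\sigma$ that is $\leq$-below every element of $Y^\sigma \setminus X^\sigma$, and conversely. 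The main obstacle here is bookkeeping rather than depth: one must confirm that $\sigma$ maps set differences to set differences, i.e. $(X \setminus Y)^\sigma = X^\sigma \setminus Y^\sigma$, and that the ``smaller than all elements of the difference'' condition transfers faithfully in both directions. I would check both implications explicitly to be safe.

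For the second claim, I would first rewrite the left side using the biconditional just proved. By definition, $\Min(G,S,\preccurlyeq_\sigma)$ is the $\preccurlyeq_\sigma$-smallest element of $\{S^g \mid g \in G\}$. Applying $\sigma$ to everything and using the order isomorphism, the $\preccurlyeq_\sigma$-smallest member $S^{g_0}$ of this set is characterized by $(S^{g_0})^\sigma$ being the $\preccurlyeq$-smallest member of $\{(S^g)^\sigma \mid g \in G\}$. Now $(S^g)^\sigma = (S^\sigma)^{\sigma^{-1} g \sigma}$, and as $g$ ranges over $G$ the conjugate $\sigma^{-1} g \sigma$ ranges over $G^\sigma := \sigma^{-1} G \sigma$, so the set $\{(S^g)^\sigma \mid g \in G\}$ equals $\{(S^\sigma)^h \mid h \in G^\sigma\}$. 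Hence its $\preccurlyeq$-minimum is exactly $\Min(G^\sigma, S^\sigma, \preccurlyeq)$.

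Putting the pieces together, we have shown that $(\Min(G,S,\preccurlyeq_\sigma))^\sigma = \Min(G^\sigma, S^\sigma, \preccurlyeq)$, and applying $\sigma^{-1}$ to both sides yields the desired identity
\[
\Min(G,S,\preccurlyeq_\sigma) = \Min(G^\sigma, S^\sigma, \preccurlyeq)^{\sigma^{-1}}.
\]
I anticipate that the one point requiring genuine care is the conjugation step: one must be explicit about the convention $G^\sigma = \sigma^{-1} G \sigma$ so that the reindexing $g \mapsto \sigma^{-1} g \sigma$ is a bijection of $G$ onto $G^\sigma$, and verify that this is consistent with the right-action notation $S^g$ used throughout. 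Everything else is a direct consequence of the order isomorphism established in the first part.
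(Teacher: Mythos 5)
Your proposal is correct and takes essentially the same route as the paper: both establish the biconditional by transporting the witnessing element of Definition \ref{metaorder} through $\sigma$, and both derive the formula for $\Min$ from the order-respecting bijection $X \mapsto X^\sigma$ between $S^G$ and $(S^\sigma)^{G^\sigma}$. Your explicit check of the conjugation identity $(S^g)^\sigma = (S^\sigma)^{\sigma^{-1}g\sigma}$ simply spells out what the paper leaves implicit when it asserts that this map is a well-defined bijection.
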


\begin{proof}

  Following Definition \ref{metaorder}, $X \preccurlyeq_\sigma Y$ if and only if
  there is an $x \in X$ such that $x\not\in Y$ and for all
  $y \in Y \backslash X$ it holds that $x \leq_\sigma y$.
  By definition of $\leq_\sigma$, this is the case whenever $x^\sigma \leq y^\sigma$,
  and since $x^\sigma \in X^\sigma$ and for all $y^\sigma$ in $Y^\sigma \backslash X^\sigma$ it
  holds that $x^\sigma \leq y^\sigma$, it follows that $X^\sigma \preccurlyeq Y^\sigma$.

  \medskip

  Consider the map $\varphi_\sigma : S^G \rightarrow (S^\sigma)^{G^\sigma}$ that maps sets
  $X \in S^G$ to $X^\sigma \in (S^\sigma)^{G^\sigma}$. This map is bijective, and by the above it respects the ordering, so the second claim follows.

\end{proof}

Lemma~\ref{lem:map} gives an efficient method to calculate minimal images under
different orderings without having to alter the underlying algorithm. The most
expensive part of this algorithm is calculating \(G^\sigma\), but this is still very
efficiently implemented in systems such as GAP, and also can be cached so it
only has to be calculated once for a given \(G\) and \(\sigma\).

\section{Dynamic Ordering of \(\Omega\)}

In Section \ref{sec:static}, we looked at methods for choosing an ordering for
\(\Omega\) that allows a minimal image algorithm to search more quickly.
There is a major limitation to this technique -- it does not make use of
the sets whose canonical image we wish to find.

In this section, instead of producing an ordering ahead of time, we will
incrementally define the ordering of \(\Omega\) as the algorithm
progresses.
At each stage we will consider exactly which extension of our partially
constructed ordering will lead to the smallest increase in the number of sets we
must consider.

We are not free to choose our ordering arbitrarily as we must still map two
sets in the same orbit of \(G\) to the same canonical image.
However, we can use different orderings for sets that are in different
orbits of \(G\).

Firstly, we will explain how we build the orderings that our algorithm uses.


\subsection{Orderings}

When building canonical images, we build orderings as the algorithm progresses.
We represent these partially built orderings as ordered partitions.

\begin{defi}
  Let $k \in \N$ and let \(P=[X_1,\dots,X_k]\) be an ordered partition of
  $\powset(\Omega)$. Then, given two
  subsets $S$ and $T$ of $\Omega$, we write \(S <_P T\) if and only if the cell
  that contains \(S\) occurs before the cell that contains \(T\) in \(P\).

  We say that $P$ is \emph{\(G\)-invariant} if and only if for all
  \(i \in \{1,\dots,k\}\) and \(g \in G\) it holds that \(S \in X_i\)
  if and only if \(S^g \in X_i\).

  A \emph{refinement} of an ordered partition $[X_1,\dots,X_k]$ is an ordered
  partition $[Y_{1,1},Y_{1,2},\dots,Y_{k,l}]$ where $l \in \N$ and such that,
  for all $i \in \{1,\dots,k\}$ and $j \in \{1,\dots,l\}$, we have that
  $Y_{i,j} \subseteq X_i$.

  A \emph{completion} of an ordered partition $X = [X_1,\dots,X_k]$ is a
  refinement where every cell is of size one. Given an ordering for
  \(\Omega\), the \emph{standard completion} of an ordered partition \(X\)
  orders the members of each cell of \(X\) using the ordering on sets from
  Definition \ref{metaorder}.
\end{defi}

In our algorithm we need a completion of an ordered partition, but the exact
completion is unimportant -- it is only important that, given an ordered
partition \(X\), we always return the same completion.
For this reason we define the standard completion of an ordered partition.

\begin{ex}\label{ex:orderpart}
Let $G:=\langle (12)\rangle \le \SX{3}$ and $\Omega :=\{1,2,3\}$. Moreover let

$P:=[ \{\}, \{1\}, \{2\}, \{1,3\}, \{2,3\} \mid \{3\}, \{1,2\}, \{1,2,3\} ]$
be an ordered partition of $\powset(\Omega)$.

The orbits of $G$ on $\Omega$ are $\{1,2\}$ and $\{3\}$.
In particular all elements of $G$ stabilize the partition
$P$.

The ordered partition $Q:=[\{1,3\}, \{2\} \mid \{\}, \{1\}, \{2,3\} \mid \{3\}, \{1,2\}, \{1,2,3\} ]$ is a refinement
of $P$ that is not $G$-invariant.

To see this, we let $g:=(12) \in G$. We have that $\{1,3\}$ is in the first cell
of \(Q\), but $\{1,3\}^g = \{2,3\}$ is not in the first cell.
\end{ex}

In Example~\ref{ex:orderpart} we only considered a very small group, because
the size of \(\powset(\Omega)\) is \(2^{|\Omega|}\). In practice we will not explicitly
create ordered partitions of \(\powset(\Omega)\), but instead store a compact
description of them from which we can deduce the cell that any particular set is in.

In this paper, we will consider two methods of building and refining ordered partitions. We first
define the orbit count of a set, which we will use when building refiners.

\begin{defi}\label{def:orbcount}
  Let \(G\) be a group acting on an ordered set \(\Omega\), and
  \(S \subseteq \Omega\).
  Define the \textbf{orbit count} of \(S\) in \(G\), denoted \(\Orbcount(G,S)\)
  as follows: Given the list \(\Orb(G)\) of orbits of $G$ on $\Omega$ sorted
  by the their smallest member, the list
  $\Orbcount(G,S)$ contains the size of the intersection \(|o \cap S|\) in place
  of $o \in \Orb(G)$.
\end{defi}

We will see the practical use of \(\Orbcount\) in Lemma~\ref{lem:orbcount2}.

\begin{lem}\label{lem:orbcount}
  Suppose that $(\Omega, \le)$ is a totally ordered finite set and that
  $G \le \Sym{\Omega}$. Suppose further that \(S,T \subseteq \Omega\) and that there is some $g \in G$ such that $S^g = T$. Then \(\Orbcount(G,S) = \Orbcount(G,T)\).
\end{lem}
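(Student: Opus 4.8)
The plan is to reduce the equality of the two lists to an entrywise comparison, and then to exploit the fact that any $g \in G$ permutes each orbit of $G$ within itself. The statement is essentially a formal bookkeeping argument, so the main work is setting up the right bijection.

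First I would observe that, by Definition~\ref{def:orbcount}, both \(\Orbcount(G,S)\) and \(\Orbcount(G,T)\) are lists indexed by the same ordered list \(\Orb(G)\): the entry in the position of an orbit \(o\) is \(|o \cap S|\) in the first list and \(|o \cap T|\) in the second. Hence it suffices to prove that \(|o \cap S| = |o \cap T|\) for every \(o \in \Orb(G)\), and the equality of the two lists follows position by position.

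Next I would fix an orbit \(o \in \Orb(G)\) and use the defining property of an orbit: \(o\) is setwise invariant under the action of \(G\), so in particular \(o^g = o\). The key step is then the set identity \((o \cap S)^g = o \cap T\). Since \(g\) acts as a bijection on \(\Omega\), applying \(g\) commutes with intersection, so \((o \cap S)^g = o^g \cap S^g\); substituting \(o^g = o\) and the hypothesis \(S^g = T\) gives \((o \cap S)^g = o \cap T\).

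Finally, because \(g\) acts injectively on \(\Omega\), it restricts to a bijection from \(o \cap S\) onto its image \((o \cap S)^g = o \cap T\). Therefore \(|o \cap S| = |o \cap T|\), and since \(o\) was an arbitrary orbit the two orbit-count lists agree in every position, as required. The only point that genuinely needs to be invoked, rather than being purely formal, is the \(G\)-invariance of each orbit, since this is precisely what makes \(o^g = o\) and hence the set identity above valid; everything else is a routine consequence of \(g\) being a bijection.
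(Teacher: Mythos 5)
Your proof is correct and follows essentially the same route as the paper's: fix an orbit \(o\), use its \(G\)-invariance to get \(o^g = o\), and conclude \((o \cap S)^g = o^g \cap S^g = o \cap T\), so that \(g\) gives a bijection between \(o \cap S\) and \(o \cap T\) and the orbit-count lists agree entrywise. Your write-up is somewhat more explicit about the entrywise reduction and the cardinality step, but there is no substantive difference.
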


\begin{proof}
  Let $o \in \Orb(G)$ and $g \in G$ with $S^g = T$. Then $o^g = o$ and
  $\alpha \in (o \cap S)$ if and only if $\alpha^g \in (o \cap S)^g$,
  if and only if $\alpha^g \in (o^g \cap S^g) = (o \cap T)$.

\end{proof}

\begin{defi}\label{def:refiners}
  Let \(P\) be an ordered partition of \(\powset(\Omega)\).

  \begin{itemize}
  \item If $\alpha \in \Omega$, then the \textbf{point refinement} of \(P\) by
    $\alpha$ is the ordered partition \(Q\) defined in the following way: Each cell \(X_i\) of \(P\)
    is split into two cells, namely the cell
    {\(\{S \mid S \in X_i, \alpha \in S\}\)}, and the cell
    {\(\{S \mid S \in X_i, \alpha \not\in S\}\)}.
    If one of these sets is empty, then \(X_i\) is not split.

  \item If \(G \le \Sym{\Omega}\) and \(C = \Orbcount(G,T)\) for some set
    \(T \subseteq \Omega\), then the \textbf{orbit refinement} of \(P\) by \(C\)
    is the ordered partition \(Q\) defined as follows: Each cell \(X_i\) of \(P\) is split
    into two cells, namely
    \(\{S\ \mid S \in X_i, \Orbcount(G,S) = C\}\), and
    \(\{S\ \mid S \in X_i, \Orbcount(G,S) \neq C\}\).
    If one of these sets is empty, \(X_i\) is not split.
  \end{itemize}
\end{defi}

\subsection{Algorithm}

We will now present our algorithm. First, we give a technical definition which
will be used in proving the correctness of our algorithm.

\begin{defi}
For all $n \in\N$ we define $\LL_n$ to be the set of lists of length $n$ whose
entries are non-empty subsets of $\Omega$. If $X \in \LL_n$, then as a
convention we write $X_1,\dots,X_n$ for the entries of the list $X$.

If $X \in \LL_n$ and $H \le G \le \SX{n}$ is such that $|G:H|=k \in \N$ and
$Q=\{q_1,\dots,q_k\}$ is a set of coset representatives of $H$ in $G$, then we
define $X^Q$ to be the list whose first $k$ entries are
$X_1^{q_1},\dots,X_1^{q_k}$, followed by $X_2^{q_1},\dots,X_2^{q_k}$ until the last
$k$ entries are $X_n^{q_1},\dots,X_n^{q_k}$. We note that $X^Q \in \LL_{n \cdot
k}$.

Let $X,Y \in \LL_n$. We say that $X$ and $Y$ are $G$-equivalent if and only if
there exist a permutation $\sigma$ of $\{1,\dots,n\}$ and group elements
$g_1,\dots,g_n \in G$ such that, for all $i \in \{1,\dots,n\}$, it holds that
$Y_i=X_{i^\sigma}^{g_i}$.
\end{defi}

We now prove a series of three lemmas about coset representatives, which form
the basis for the correctness proof of our algorithm. They are used to perform
the recursive step, moving from a group to a subgroup.

\begin{lem}\label{rep}
Suppose that $G$ is a permutation group on a set \(\Omega\), that \(H\) is a
subgroup of \(G\) of index $k \in \N$ and that $T$ is a set of left coset
representatives of $H$ in $G$.

Then the following are true:

\begin{enumerate}

\item $|T|=k$.

\item If $T=\{t_1,\dots,t_k\}$ and $g \in G$ and if, for all $i \in \{1,\dots,k\}$,
we define $q_i:=gt_i$, then $Q:=\{q_1,\dots,q_k\}$ is also a set of coset
representatives of $H$ in $G$. In particular there is a bijection from $Q$ to
any set of left coset representatives of $H$ in $G$.

\end{enumerate}
\end{lem}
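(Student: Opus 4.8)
The plan is to prove Lemma~\ref{rep} by treating the two parts separately, relying on the standard correspondence between left cosets and their representatives. Throughout I will use the fact that a set $T$ of left coset representatives of $H$ in $G$ is, by definition, a set containing exactly one element from each left coset $gH$, so that the cosets $\{tH \mid t \in T\}$ are pairwise distinct and their union is $G$.

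For part (i), the index $|G:H| = k$ is by definition the number of distinct left cosets of $H$ in $G$. Since $T$ contains exactly one representative from each coset, the map $t \mapsto tH$ is a bijection from $T$ onto the set of left cosets, and hence $|T| = k$. This is essentially immediate from the definitions.

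For part (ii), I would first show that the $q_i = g t_i$ lie in pairwise distinct cosets. Suppose $q_i H = q_j H$ for some $i,j$; then $g t_i H = g t_j H$, and cancelling $g$ on the left (which is legitimate since left multiplication by $g$ is a bijection on $G$ and permutes the left cosets) gives $t_i H = t_j H$, forcing $i = j$ because the $t_i$ represent distinct cosets. Thus the $k$ elements $q_1,\dots,q_k$ meet $k$ distinct cosets; since there are only $k$ cosets in total, every coset is met exactly once, so $Q = \{q_1,\dots,q_k\}$ is indeed a full set of left coset representatives. The final sentence about a bijection from $Q$ to any other set of representatives then follows because any two sets of left coset representatives both biject onto the set of $k$ cosets via $x \mapsto xH$, and composing one such bijection with the inverse of the other yields the desired bijection.

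I do not anticipate a genuine obstacle here; the lemma is foundational and the content is the observation that \emph{left multiplication by a fixed $g \in G$ permutes the left cosets of $H$}. The one point to state carefully is the cancellation step: $g t_i H = g t_j H$ implies $t_i H = t_j H$ because left translation $L_g : xH \mapsto (gx)H$ is a well-defined bijection on the set of left cosets, so equality of images forces equality of preimages. Once this is in place, both parts are short counting arguments.
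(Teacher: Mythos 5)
Your proposal is correct and follows essentially the same route as the paper: part (i) from the definition of index, and part (ii) by showing $q_iH = q_jH$ forces $t_iH = t_jH$ (the paper writes the cancellation explicitly as $t_j^{-1}t_i = t_j^{-1}g^{-1}gt_i \in H$, which is the same step you justify via left translation being a bijection on cosets). If anything, you are slightly more thorough than the paper, which leaves the counting argument that the $k$ distinct cosets exhaust all of $G$, and the final bijection claim, implicit.
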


\begin{proof}
By definition the index of $H$ in $G$ is the number of (left or right) cosets of $H$ in $G$.

For the second statement we let $i,j \in \{1,\dots,k\}$ be such that $q_iH=q_jH$,
hence $gt_iH=gt_jH$. Then $t_j^{-1}t_i=t_j^{-1}g^{-1}gt_i \in H$ and hence
$t_iH=t_jH$. Hence $i=j$ because $t_i$ and $t_j$ are from a set of coset
representatives.
\end{proof}

\begin{lem}\label{equi-buildup}
Suppose that $G$ is a permutation group on a set \(\Omega\) and that $S,T
\subseteq \Omega$ are such that the lists $[S]$ and $[T]$ are $G$-equivalent.

Let \(H\) be a subgroup of \(G\) of index $k \in \N$ and let $P=\{p_1,\dots,p_k\}$
and $Q:=\{q_1,\dots,q_k\}$ be sets of left coset representatives of $H$ in $G$.

Then $[S]^P$ and $[T]^Q$ are \(H\)-equivalent.
\end{lem}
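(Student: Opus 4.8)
The plan is to reduce both lists to a statement about cosets and then invoke Lemma~\ref{rep}. First I would unwind what $G$-equivalence means for two lists of length one: since the only permutation of $\{1\}$ is the identity, $[S]$ and $[T]$ being $G$-equivalent is simply the assertion that there is some $g \in G$ with $T = S^g$. Fix such a $g$. Writing out the two lists to be compared, $[S]^P = [S^{p_1},\dots,S^{p_k}]$ has length $k$, and likewise $[T]^Q = [T^{q_1},\dots,T^{q_k}] = [S^{g q_1},\dots,S^{g q_k}]$, using $T = S^g$.

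To establish $H$-equivalence of these two lists, I must produce a permutation $\tau$ of $\{1,\dots,k\}$ and elements $h_1,\dots,h_k \in H$ with $([T]^Q)_i = \bigl(([S]^P)_{i^\tau}\bigr)^{h_i}$ for every $i$, that is, $S^{g q_i} = S^{p_{i^\tau} h_i}$. A sufficient condition is $g q_i = p_{i^\tau} h_i$, equivalently $g q_i \in p_{i^\tau} H$, so the whole problem becomes one of matching the cosets $g q_i H$ against the cosets $p_j H$.

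Here is where Lemma~\ref{rep}(ii) does the work: applying it to the coset representatives $\{q_1,\dots,q_k\}$ and the element $g$ shows that $\{g q_1,\dots,g q_k\}$ is again a set of left coset representatives of $H$ in $G$. Hence the cosets $g q_1 H,\dots,g q_k H$ are pairwise distinct and exhaust all $k$ cosets, exactly as $p_1 H,\dots,p_k H$ do. I would then define $\tau$ by letting $i^\tau$ be the unique index $j$ with $g q_i H = p_j H$; distinctness on both sides makes $\tau$ a well-defined bijection, hence a permutation of $\{1,\dots,k\}$. Setting $h_i := p_{i^\tau}^{-1} g q_i$ gives $h_i \in H$ and $g q_i = p_{i^\tau} h_i$, whence $S^{g q_i} = \bigl(S^{p_{i^\tau}}\bigr)^{h_i}$, which is precisely the required relation between $[T]^Q$ and $[S]^P$.

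I do not expect a genuine obstacle here; the content is really just bookkeeping with coset representatives. The one point that needs care is the direction of the permutation $\tau$ and checking that it is a bijection rather than merely a function --- this is guaranteed precisely because both $\{g q_i\}$ and $\{p_j\}$ are full sets of coset representatives, so neither collides nor omits a coset. Everything else is a direct substitution once Lemma~\ref{rep}(ii) supplies the new representative set.
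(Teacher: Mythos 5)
Your proposal is correct and follows essentially the same route as the paper's own proof: extract $g$ with $S^g = T$, invoke Lemma~\ref{rep}(ii) to see that $\{gq_1,\dots,gq_k\}$ is again a set of left coset representatives, and match cosets $gq_iH = p_{i^\tau}H$ to define the permutation and the elements $h_i = p_{i^\tau}^{-1}gq_i \in H$. The only differences are notational (the paper writes $t_i := gq_i$ and phrases the coset matching as a bijection between two representative sets), so there is nothing to add.
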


\begin{proof}
As \([S]\) and \([T]\) are \(G\)-equivalent, we know that there exists a group
element $g \in G$ such that $S^g=T$.

We fix $g$, for all $i \in \{1,\dots,k\}$ we let $t_i:=gq_i$ and we consider the
set $T:=\{t_i \mid i \in \{1,\dots,k\}\}$. Then $T$ is also a set of left coset
representatives of $H$ in $G$, by Lemma \ref{rep}. As $P$ is also a set of left
coset representatives, we know that $T$ and $P$ have the same size, so there is
a bijection from $P$ to $T$. This can be expressed in the following way:

There is a permutation $\sigma \in \SX{k}$ such that, for all $i \in \{1,..,k\}$,
it is true that $p_{i^\sigma}H=t_iH$. That means  there is a unique $h_i
\in H$ such that $p_{i^\sigma}h_i=t_i$.

Let now $S_i:=S^{p_i}$ and $T_i:=T^{q_i}$ for $i \in \{1,\dots,k\}$, then
\[
  T_i=T^{q_i}=(S^g)^{q_i}=S^{t_i}=S^{p_{i^\sigma}h_i}=(S^{p_{i^\sigma}})^{h_i}=(S_{i^\sigma})^{h_i},
\]
hence $[S]^P$ and $[T]^Q$ are $H$-equivalent.
\end{proof}

\begin{lem}\label{equi}
  Suppose that $G$ is a permutation group on a set \(\Omega\), that $n \in \N$ and that
  $X,Y \in \LL_n$ are $G$-equivalent. Let \(H\) be a subgroup of \(G\) of index $k \in \N$ and let
  $P=\{p_1,\dots,p_k\}$ and $Q:=\{q_1,\dots,q_k\}$ be sets of left coset representatives of $H$ in $G$.

  Then $X^P$ and $Y^Q$ are \(H\)-equivalent.
\end{lem}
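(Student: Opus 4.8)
The plan is to generalise Lemma~\ref{equi-buildup} from single-element lists to lists of arbitrary length $n$ by combining the $G$-equivalence of $X$ and $Y$ (which gives a permutation $\sigma$ and group elements $g_1,\dots,g_n$ matching up their entries) with the coset-representative machinery of Lemma~\ref{rep}. The key observation is that the construction $X \mapsto X^P$ is built block-by-block: the entries of $X^P$ come in $n$ consecutive blocks of length $k$, where the $m$-th block is $X_m^{p_1},\dots,X_m^{p_k}$, and similarly for $Y^Q$. So establishing $H$-equivalence of $X^P$ and $Y^Q$ reduces to matching up blocks correctly and then matching entries within each pair of blocks.

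First I would unwind the definition of $G$-equivalence of $X$ and $Y$: there is a permutation $\sigma \in \Sym{\{1,\dots,n\}}$ and elements $g_1,\dots,g_n \in G$ with $Y_i = X_{i^\sigma}^{g_i}$ for each $i$. I would fix an index $i \in \{1,\dots,n\}$ and focus on the $i$-th block of $Y^Q$, whose entries are $Y_i^{q_1},\dots,Y_i^{q_k}$. Substituting $Y_i = X_{i^\sigma}^{g_i}$, these become $X_{i^\sigma}^{g_i q_j}$ for $j \in \{1,\dots,k\}$. Now I would invoke Lemma~\ref{rep}(ii) with the fixed element $g_i$: the set $\{g_i q_1,\dots,g_i q_k\}$ is again a set of left coset representatives of $H$ in $G$, so it matches up with $P = \{p_1,\dots,p_k\}$ via a permutation $\tau_i \in \Sym{\{1,\dots,k\}}$ and unique elements $h_{i,j} \in H$ such that $g_i q_j = p_{j^{\tau_i}} h_{i,j}$. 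This is precisely the block-local argument already carried out in the proof of Lemma~\ref{equi-buildup}, now applied once per block.

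Consequently each entry $X_{i^\sigma}^{g_i q_j} = (X_{i^\sigma}^{p_{j^{\tau_i}}})^{h_{i,j}}$ is the image under an element of $H$ of the entry of $X^P$ sitting in position $j^{\tau_i}$ of the $(i^\sigma)$-th block. The final bookkeeping step is to assemble a single permutation $\rho$ of $\{1,\dots,nk\}$ and a family of elements of $H$ witnessing the $H$-equivalence of $X^P$ and $Y^Q$: $\rho$ sends the position of the $j$-th entry in the $i$-th block of $Y^Q$ to the position of the $j^{\tau_i}$-th entry in the $(i^\sigma)$-th block of $X^P$, and the accompanying group element is $h_{i,j}$. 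I would then verify that the required identity holds entrywise by construction.

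I expect the main obstacle to be purely notational rather than conceptual: translating the two-index description (block index $i$, within-block index $j$) into the single linear index on $\{1,\dots,nk\}$ demanded by the definition of $\LL_{nk}$, and confirming that the combined reindexing $\rho$ built from the block permutation $\sigma$ and the per-block permutations $\tau_i$ is genuinely a bijection of $\{1,\dots,nk\}$. The underlying algebra is a direct iteration of Lemma~\ref{equi-buildup}, so no genuinely new difficulty arises; the care lies entirely in setting up indices so that the entrywise equality $Y^Q_{\text{pos}} = (X^P_{\text{pos}^\rho})^{h}$ reads off cleanly.
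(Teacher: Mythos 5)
Your proposal is correct and follows essentially the same route as the paper's proof: unwind the $G$-equivalence of $X$ and $Y$ into $\sigma$ and $g_1,\dots,g_n$, apply the single-list argument of Lemma~\ref{equi-buildup} once per block (which you inline via Lemma~\ref{rep}, while the paper cites Lemma~\ref{equi-buildup} as a black box to obtain per-block permutations $\alpha_i$ and elements $h_{ij}\in H$), and then assemble the block permutation $\sigma$ and the within-block permutations into one permutation of $\{1,\dots,nk\}$ witnessing $H$-equivalence. The paper's proof carries out exactly the index bookkeeping you flag as the main obstacle, writing each linear index $l$ as $c_l\cdot k + r_l$ and defining the combined permutation explicitly.
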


\begin{proof}
As \(X\) and \(Y\) are \(G\)-equivalent, we know that there exist a permutation
$\sigma \in \SX{n}$ and $g_1,\dots,g_n \in G$ such that $Y_i=X_{i^\sigma}^{g_i}$ for all $i \in
\{1,\dots,n\}$. We fix this permutation $\sigma$.

If \(i \in \{1,\dots,n\}\), then \([X_{i^\sigma}]\) and \([Y_i]\) satisfy the
hypothesis of Lemma \ref{equi-buildup}, so it follows that \([X_{i^\sigma}]^P\)
and \([Y_i]^Q\) are $H$-equivalent.

So we find a permutation \(\alpha_i \in \SX{k}\) and group elements
\(h_{i1},\dots,h_{ik} \in H\) such that \(
(X_{i^\sigma}^{p_{j^{\alpha_i}}})^{h_{ij}} = Y_i^{q_i}\) for all $j \in
\{1,\dots,k\}$.

Using \(\sigma\) and $\alpha_1,\dots,\alpha_k$ we define a permutation \(\gamma\)
on \(1,\dots,n \cdot k\).

First we express $l \in \{1,\dots,n \cdot k\}$ uniquely as $l=c_l \cdot k+r_l$ where
$c_l \in \{0,\dots,n-1\}$ and $r_l \in \{1,\dots,k\}$ and we define

\[
  l^\gamma:=(c_l+1)^\sigma \cdot k + r_l^{\alpha_{c_l+1}}.
\]

This is well-defined because of the ranges of $c_l$ and $r_l$ and it is a
permutation because of the uniqueness of the expression and because $\sigma$ and
$\alpha_1,\dots,\alpha_n$ are permutations.

Then, for each $l \in \{1,\dots,n \cdot k\}$, expressed as $l=c_l \cdot k+r_l$ as we did
above, we set $h:=h_{c_l+1,r_l}$, $X'_l:=X_{c_l+1}^{p_{r_l}}$ and
$Y'_l:=Y_{c_l+1}^{q_{r_l}}$.

Then $X^P=[X'_1,\dots,X'_{n \cdot k}]$ and $Y^Q=[Y'_1,\dots,Y'_{n \cdot k}]$.

If we set $p:=p_{r_l^{a_{c_l+1}}}$ and $q:=q_{r_l}$, then
we have, for all $l=c_l \cdot k+r_l$, that

\[
  X_{l^{\gamma}}^{\prime h} = (X_{(c_l+1)^\sigma}^{p})^{h} =
  ((X_{c_l+1}^\sigma)^{p})^{h} = Y_{c_l+1}^{q} = Y'_l.
\]

This is $H$-equivalence.
\end{proof}

We can now describe the algorithm we use to compute canonical images,
and prove that it works correctly.

\begin{defi} \label{def:algparts}
Suppose that $\Omega$ is a finite set, that $G$ is a permutation group on $\Omega$,
that $L \in \LL_k$ and that $P$ is an ordered partition on $\powset({\Omega})$.

  \begin{itemize}
  \item An \emph{$\Omega$-selector} is a function $\mathcal{S}$ such that
    \begin{itemize}
    \item $\mathcal{S}(\Omega, G, L, P) = \omega \in \Omega$, where $|\omega^G| > 1$;
    \item $\mathcal{S}(\Omega, G, L, P) = \mathcal{S}(\Omega, G, M, P)$ whenever
      $L$ and $M$ from $\LL_k$ are $G$-equivalent.
    \end{itemize}

  \item An \emph{Ordering refiner} is a function $\mathcal{O}$ such that for
    all $G$-invariant partitions $P$ of $\powset{({\Omega})}$
    \begin{itemize}
    \item $\mathcal{O}(\Omega, G, L, P) = P'$, where $P'$ is a $G$-invariant
      refinement of $P$;
    \item $\mathcal{O}(\Omega, G, L, P) = \mathcal{O}(\Omega, G, M, P)$ whenever
      $L$ and $M$ from $\LL_k$ are $G$-equivalent.
    \end{itemize}
   \end{itemize}
 \end{defi}

An ordering refiner cannot return a total ordering, unless $G$ acts trivially,
because the partial ordering cannot distinguish between values that are contained in the same orbit of $G$.

Our method for finding canonical images is outlined in Algorithm \ref{alg:canimage}. It recursively searches for the minimal image of a collection of lists,
refining the ordering that is used as search progresses.

\begin{algorithm}
  \caption{$\textsc{CanImage}$}\label{alg:canimage}
  \begin{algorithmic}[1]
    \Require{$\mathcal{S}$ is an $\Omega$-selector, $\mathcal{O}$ is an ordering refiner}
    \Procedure{CanImageRecurse}{$\Omega, G, L, P$}
    \If{$|G| = 1$}\label{line:if}
    \State{$P' := \textrm{Standard completion of } P$}
    \State{\Return{Smallest member of \(L\) under \(P'\)}}
    \EndIf\label{line:endif}
    \State{$H := G_{\mathcal{S}(\Omega,G,L,P)}$}
    \State{$Q := $ coset representatives of $H$ in $G$}
    \State{$P' := \mathcal{O}(\Omega,H,L^Q,P)$}
    \State{$L' := [S \mid S \in L^Q, \not\exists T \in L^Q. T <_{P'} S]$}
    \State\Return{\Call{CanImageRecurse}{$\Omega, H, L', P'$}}
    \EndProcedure{}
  \end{algorithmic}


  \begin{algorithmic}[2]
    \Procedure{CanImageBase}{$\Omega, InputG, InputS$}
    \State\Return{\Call{CanImageRecurse}{$\Omega, InputG, [InputS], [\powset(\Omega)]$}}
  \EndProcedure{}
  \end{algorithmic}
\end{algorithm}

\begin{thm}\label{thm:canorb}
Suppose that $\Omega$ is a finite set, that $G$ is a permutation group on $\Omega$, and
 that $X \subseteq \Omega$. Then $\textsc{CanImageBase}(\Omega,G,X) \in X^G$.
\end{thm}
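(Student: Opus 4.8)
The plan is to prove that the recursive procedure \textsc{CanImageRecurse} returns an element of the $G$-orbit of the input set, and that this element depends only on the orbit, by induction on $|G|$. The key structural fact is that the algorithm replaces the current group $G$ by a point stabilizer $H = G_{\mathcal{S}(\Omega,G,L,P)}$, which has strictly smaller order (since the selector always picks a point $\omega$ with $|\omega^G| > 1$, so $H \neq G$), and simultaneously replaces the list $L$ by the filtered list $L'$ extracted from $L^Q$, where $Q$ is a transversal of $H$ in $G$. For this theorem I only need the orbit-membership conclusion; I would phrase the induction hypothesis carefully to also track $G$-equivalence, since that is what makes the recursion well-posed, but the statement to be proved here is just $\textsc{CanImageBase}(\Omega,G,X) \in X^G$.

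First I would set up the induction on $|G|$. The base case is $|G| = 1$ (line~\ref{line:if}): here $L$ is passed to the standard completion step, and I must argue that the single returned set lies in $X^G$. This requires an invariant maintained down the recursion, namely that \emph{every} member of the current list $L$ lies in $X^G$ where $X$ is the original input set. I would establish this invariant separately: initially $L = [X]$, so it holds. For the inductive step, every $S \in L^Q$ has the form $S_0^{q}$ for some $S_0 \in L$ and some coset representative $q \in Q \subseteq G$, hence $S \in S_0^G \subseteq X^G$ by the invariant and the fact that orbits are closed under the group; the filtering that produces $L'$ only removes elements, so the invariant is preserved. Since at the base case the returned set is a member of $L$, it lies in $X^G$, and this membership propagates back up because nothing in the recursion leaves $X^G$.

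The substantive content is that the returned set is genuinely \emph{canonical} — independent of the orbit representative — and this is where Lemma~\ref{equi} does the real work. I would prove, again by induction on $|G|$, that if $X$ and $Y$ are $G$-equivalent lists (via the notion from Definition preceding Lemma~\ref{rep}), then $\textsc{CanImageRecurse}(\Omega, G, X, P)$ and $\textsc{CanImageRecurse}(\Omega, G, Y, P)$ return the same set. The selector and the ordering refiner are constant on $G$-equivalence classes by their defining axioms in Definition~\ref{def:algparts}, so both recursive calls stabilize the \emph{same} point $\omega$, descend to the same subgroup $H$, and apply the same refiner $\mathcal{O}(\Omega,H,\cdot,P)$. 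Lemma~\ref{equi} then guarantees that $X^Q$ and $Y^{Q'}$ are $H$-equivalent for any two transversals $Q, Q'$ of $H$ in $G$; since the filtering to $L'$ keeps exactly the $<_{P'}$-minimal cell, and $<_{P'}$ is $G$-invariant (hence constant on the relevant equivalence classes), the two filtered lists remain $H$-equivalent, and the inductive hypothesis applied to $H$ finishes the step.

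The main obstacle, and the point I would be most careful about, is the interaction between the filtering step $L' := [S \in L^Q \mid \nexists T \in L^Q.\ T <_{P'} S]$ and the $H$-equivalence established by Lemma~\ref{equi}. I must check that restricting to the minimal $<_{P'}$-cell does not destroy $H$-equivalence: because $P'$ is a $G$-invariant (hence $H$-invariant) refinement, $H$-equivalent entries lie in the same cell of $P'$, so the ``surviving'' sublists of $X^Q$ and $Y^{Q'}$ correspond to each other under the permutation and group elements witnessing $H$-equivalence. I would also need to confirm that the standard completion at the base case returns a \emph{member} of the list and that, when $|H|=1$, $H$-equivalence of two singleton-entry lists forces the lists to be permutations of one another, so that the same smallest element under the completed ordering is selected in both branches. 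Combining the orbit-membership invariant with the canonicity (orbit-independence) statement, specialized to $X = Y = [X]$ and the initial partition $P = [\powset(\Omega)]$, yields $\textsc{CanImageBase}(\Omega,G,X) \in X^G$ as required.
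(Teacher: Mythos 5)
Your proof of this statement is correct and takes essentially the same approach as the paper: the paper's entire proof is the one-line observation that every list arising in the recursion consists of elements of $X^G$, which is exactly the invariant you establish in your second paragraph (initial list $[X]$, coset representatives lie in subgroups of the original $G$ so applying them stays inside the orbit, filtering only removes entries, and the base case returns a member of the list). Note, however, that the bulk of your proposal (the orbit-independence argument via Lemma~\ref{equi} and the selector/refiner axioms) is not needed here --- that is the content of the paper's separate Theorem~\ref{thm:canimg} --- and your closing sentence overstates what is required, since membership in $X^G$ follows from the invariant alone.
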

\begin{proof}
  In every step of Algorithm \ref{alg:canimage} the list of considered sets is
  a list of elements of $X^G$.
\end{proof}

\begin{thm}\label{thm:canimg}
  Let $\Omega$ be a finite set and $G$ a permutation group on $\Omega$.
  Let $X,Y\in\LL_k$ be $G$-equivalent, and let $P$ be a $G$-invariant ordered
  partition of $\powset(\Omega)$. Then
  \[
    \textsc{CanImage}(\Omega,G,X,P) =
    \textsc{CanImage}(\Omega,G,Y,P).
  \]
\end{thm}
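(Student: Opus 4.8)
The plan is to induct on \(|G|\). The base case is \(|G|=1\): here the \(G\)-equivalence of \(X\) and \(Y\) means (taking every \(g_i\) to be the identity) that \(Y\) is merely a reordering of \(X\), so \(X\) and \(Y\) have the same entries as a multiset. Since in this case Algorithm \ref{alg:canimage} returns the smallest entry of the list under the standard completion of \(P\) — a quantity depending only on the set of entries and not on their order — both inputs yield the same output.

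For the inductive step, assume \(|G|>1\) and that the claim holds for all proper subgroups. First I would observe that both runs branch on the same point: by the second defining property of an \(\Omega\)-selector (Definition \ref{def:algparts}), the \(G\)-equivalence of \(X\) and \(Y\) gives \(\mathcal{S}(\Omega,G,X,P)=\mathcal{S}(\Omega,G,Y,P)=:\omega\), so both runs pass to the same subgroup \(H:=G_\omega\), which is proper because \(|\omega^G|>1\) forces \(|G:H|>1\) and hence \(|H|<|G|\). Crucially, \(P\) is \(H\)-invariant, since \(H\le G\) and \(P\) is \(G\)-invariant. The two runs may, however, choose different sets of coset representatives \(Q_X\) and \(Q_Y\) of \(H\) in \(G\); applying Lemma \ref{equi} to the \(G\)-equivalent lists \(X,Y\) and these two transversals yields that \(X^{Q_X}\) and \(Y^{Q_Y}\) are \(H\)-equivalent.

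Next I would show that the recursive calls receive identical partitions and \(H\)-equivalent lists. As \(X^{Q_X}\) and \(Y^{Q_Y}\) are \(H\)-equivalent and \(P\) is \(H\)-invariant, the second property of an Ordering refiner gives \(\mathcal{O}(\Omega,H,X^{Q_X},P)=\mathcal{O}(\Omega,H,Y^{Q_Y},P)=:P'\), and \(P'\) is \(H\)-invariant by the first property. The main work is to prove that the filtered lists \(X'\) and \(Y'\) are \(H\)-equivalent. Writing \(X^{Q_X}=[A_1,\dots,A_N]\) and \(Y^{Q_Y}=[B_1,\dots,B_N]\) with \(B_i=A_{i^\tau}^{h_i}\) for a permutation \(\tau\) and elements \(h_i\in H\), the \(H\)-invariance of \(P'\) forces \(A_{i^\tau}\) and \(B_i\) into the same cell of \(P'\). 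Hence \(B_i\) lies in the earliest occupied cell of \(P'\) if and only if \(A_{i^\tau}\) does; since the filtering step in Algorithm \ref{alg:canimage} keeps exactly the entries lying in this earliest occupied cell, \(\tau\) restricts to a bijection between the surviving indices of the two lists, and the corresponding \(h_i\) witness the \(H\)-equivalence of \(X'\) and \(Y'\).

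With \(X'\) and \(Y'\) shown to be \(H\)-equivalent, \(P'\) shown to be \(H\)-invariant, and \(|H|<|G|\), the inductive hypothesis gives \(\textsc{CanImage}(\Omega,H,X',P')=\textsc{CanImage}(\Omega,H,Y',P')\); as these are precisely the values returned by the two original runs, the theorem follows. I expect the \(H\)-equivalence of the filtered lists to be the delicate point: it is exactly where the \(H\)-invariance of \(P'\) is needed, ensuring that the \(<_{P'}\)-comparison driving the filter cannot separate two entries differing by an element of \(H\), so that filtering commutes with \(H\)-equivalence.
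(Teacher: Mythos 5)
Your proof is correct and takes essentially the same route as the paper's: induction on $|G|$, using the $\Omega$-selector and ordering-refiner invariance properties together with Lemma \ref{equi} to show the recursive calls receive the same partition and $H$-equivalent filtered lists. In fact your write-up is slightly more careful than the paper's at two points: you correctly record that $P'$ is $H$-invariant and that $P$ must be $H$-invariant for the refiner property to apply (the paper asserts $P'$ is $G$-invariant, which is not what Definition \ref{def:algparts} delivers when $\mathcal{O}$ is called with the group $H$), and your explicit bijection $\tau$ restricted to the surviving indices makes rigorous the paper's brief claim that the filtered lists $L'_X$ and $L'_Y$ are $H$-equivalent.
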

\begin{proof}
  We proceed by induction on the size of $G$. 

  The base case is $|G| = 1$. As \(X\) and \(Y\) are \(G\)-equivalent, \(X\) and
  \(Y\) contain the same sets, possibly in a different order. For a given \(P\)
  and \(\Omega\), there is only one standard completion of \(P\) that gives a
  complete ordering on \(\powset(\Omega)\), and so \(X\) and \(Y\) have the same
  smallest element under the standard completion of \(P\) and so the claim
  follows.

  Consider now any non-trivial group $G$, and suppose for our induction hypothesis that the claim holds for
  all groups $H$ where $|H| < |G|$.

  Definition \ref{def:algparts} and the fact that $\mathcal{S}$ is an
  $\Omega$-selector imply that
  \[
    H = G_{\mathcal{S}(\Omega,G,X,P)} = G_{\mathcal{S}(\Omega,G,Y,P)}.
  \]
  Moreover, $H$ is a proper subgroup of $G$. We take two sets $Q_1$ and $Q_2$ of
  coset representatives of $H$ in $G$, which are not necessarily equal.

  Since $\mathcal{O}$ is an ordering refiner, it holds that
  \[
    P' = \mathcal{O}(\Omega,H,X,P) = \mathcal{O}(\Omega,H,Y,P)
  \]

  By Lemma \ref{equi}, \(X^{Q_1}\) and \(Y^{Q_2}\) are \(H\)-equivalent, and by
  definition \(P'\) is $G$-invariant. If we identify the cell \(P'_i\) of \(P'\) that
  contains the smallest element of \(X^{Q_1}\), then \(L'_X\) contains those
  elements of \(X^{Q_1}\) that are in \(P'_i\). Each of these elements is
  \(H\)-equivalent to an element of \(Y^{Q_2}\), and therefore \(L'_X\) is
  \(H\)-equivalent to \(L'_Y\). Then the induction hypothesis yields that
  \[
    \textsc{CanImage}(\Omega, H, X',P) = \textsc{CanImage}(\Omega, G, Y',P),
  \]
  so the claim follows by induction.
\end{proof}

We see from Theorem \ref{thm:canorb} and \ref{thm:canimg} that
$$\textsc{CanImage}(\Omega,G,X,P) = \textsc{CanImage}(\Omega,G,Y,P)
\text{~if and only if~} Y \in X^G.$$

We note that Algorithm \ref{alg:canimage} can easily be adapted to return an
element $g$ of $G$ such that $X^g = \textsc{CanImage}(\Omega,G,X,P)$. This happens by
attaching to each set, when it is created, the permutation that maps it to the
original input \(S\). We omit this addition for readability.


\section{Experiments}\label{sec:ex}

In this section, we will compare how well our new algorithms perform in
comparison with the \textbf{MinImage} function of Linton's.

All of our experiments are performed in GAP~\cite{GAP4} using code available in
the \texttt{Images} package \cite{Images}.
Where our algorithm requires it, we use the implementation of partition
backtracking provided in the \texttt{Ferret} package \cite{Fer}.

We consider a selection of different canonical image algorithms and we analyze how
they perform compared to each other, and compared to the traditional minimal image
algorithm of Linton's, which we will refer to as \textbf{MinImage}.

The first three algorithms that we consider come from Section \ref{sec:alternate
ordering}. They produce, given a group \(G\) on a set \(\Omega\), an ordering of
\(\Omega\). This ordering is then used in \textbf{MinImage}.

\begin{enumerate}
\item \textbf{FixedMinOrbit} uses results from Section \ref{sec:alternate ordering}
  to calculate an alternative ordering of \(\Omega\), choosing small orbits first.
\item \textbf{FixedMaxOrbit} works similarly to \textbf{FixedMinOrbit}, choosing large
  orbits first.
\end{enumerate}

We also consider algorithms that dynamically choose which value to branch on as
search progresses. We will use the following lemma for the proof of correctness
for all our orderings.

\begin{lem}\label{lem:orbcount2}
  Let $\Omega$ be a finite set,  $G$ a permutation group on $\Omega$, and $P$
  an ordered partition of $\powset(\Omega)$. 
  
  Let $L \in \LL_n$, and
  \(\Count = [ \Orbcount(G,S) \mid S \in L ]\).

  \begin{itemize}
  \item Any function that accepts \((\Omega, G, L, P)\) and returns
    some \(\omega \in \Omega\) with $|\omega^G| > 1$ that is invariant
    under reordering the elements of \(\Count\), is an \(\Omega\)-selector.

  \item Any function that accepts \((\Omega, G, L, P)\) and returns
    either \(P\) or the point refinement of \(P\) by some \(\omega^G\) for
    \(\omega \in \Omega\), and is invariant under permutation of the elements
    of \(\Count\), is an ordering refiner.

  \item Any function that accepts \((\Omega, G, L, P)\) and returns either
    \(P\) or the orbit refiner of \(P\) by some member of \(\Count\) and is invariant
    under permutation of the elements of \(\Count\), is an ordering refiner.
\end{itemize}
\end{lem}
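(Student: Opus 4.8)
The plan is to check, for each of the three bullets, the two conditions that Definition~\ref{def:algparts} requires of an \(\Omega\)-selector or an ordering refiner: that the returned object has the correct type (a point \(\omega\) with \(|\omega^G|>1\) in the first case, a \(G\)-invariant refinement of \(P\) in the other two), and that the returned object does not change when the list \(L\) is replaced by a \(G\)-equivalent list \(M\). Since the type conditions are either assumed outright or follow from a short computation, the common core of all three parts is the invariance under \(G\)-equivalence, which I would extract as a single observation about \(\Orbcount\).

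That observation is: if \(L,M\in\LL_n\) are \(G\)-equivalent, then the associated count lists (writing \(\Count(L)=[\Orbcount(G,L_i)]\) and similarly for \(M\)) are reorderings of one another. To prove it I would unpack \(G\)-equivalence to obtain a permutation \(\sigma\in\SX{n}\) and elements \(g_1,\dots,g_n\in G\) with \(M_i=L_{i^\sigma}^{g_i}\) for all \(i\). Since \(L_{i^\sigma}^{g_i}\) lies in the \(G\)-orbit of \(L_{i^\sigma}\), Lemma~\ref{lem:orbcount} yields \(\Orbcount(G,M_i)=\Orbcount(G,L_{i^\sigma})\), so \(\Count(M)\) is exactly \(\Count(L)\) permuted by \(\sigma\). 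As each of the three functions is assumed to depend on its arguments only through \(\Count\) up to permutation, it then returns the same value on \(L\) as on any \(G\)-equivalent \(M\); this is the second defining condition in all three cases at once.

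It then remains to verify the type conditions. For the selector this is immediate, since the function returns by hypothesis some \(\omega\) with \(|\omega^G|>1\). For the orbit-refinement function I would note that splitting every cell of \(P\) according to the value of \(\Orbcount(G,S)\) is a refinement by construction, and that it is \(G\)-invariant: \(G\)-invariance of \(P\) gives \(S\in X_i\Leftrightarrow S^g\in X_i\), while Lemma~\ref{lem:orbcount} gives \(\Orbcount(G,S)=\Orbcount(G,S^g)\), so \(S\) and \(S^g\) always fall into the same new cell; returning \(P\) unchanged is trivially a \(G\)-invariant refinement. The point-refinement function is treated in the same way, the only thing to confirm being that the cell split determined by the chosen orbit \(\omega^G\) is again a \(G\)-invariant property of sets.

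The step I expect to need the most care is precisely this last point. Splitting a cell by whether \(\omega\in S\) is \emph{not} \(G\)-invariant for a general \(\omega\), because \(\omega\in S\) and \(\omega\in S^g\) (that is, \(\omega^{g^{-1}}\in S\)) need not agree unless \(\omega\) is fixed by \(G\); so I would either read the refinement as taken with respect to the whole orbit \(\omega^G\), which is a \(G\)-invariant subset of \(\Omega\), or record that in Algorithm~\ref{alg:canimage} the ordering refiner is only ever applied to the stabilizer of the branched point, in which \(\omega\) is fixed. With that in place the \(G\)-invariance reduces to the same orbit-stability argument used for \(\Orbcount\), and the remaining bookkeeping --- matching ``invariant under permutation of \(\Count\)'' to the permutation \(\sigma\) delivered by \(G\)-equivalence --- is supplied directly by the observation above.
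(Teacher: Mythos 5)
Your proposal is correct and takes essentially the same approach as the paper: the paper's entire proof consists of your central observation, namely that Lemma~\ref{lem:orbcount} forces $G$-equivalent lists to yield the same \(\Count\) up to reordering, so that any function depending on its input only through \(\Count\) modulo reordering agrees on $G$-equivalent inputs. The only difference is that the paper dismisses everything else with ``the only thing we have to show is'' the $G$-equivalence condition, whereas you also verify the type conditions of Definition~\ref{def:algparts} --- including the genuine subtlety, which the paper never addresses, that a point refinement by a point moved by $G$ is \emph{not} $G$-invariant, so the second bullet must be read as refining by the whole orbit \(\omega^G\) (or invoked only where \(\omega\) is fixed by the group in question).
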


\begin{proof}
  The only thing we have to show is that, if $L$ and $M$ are $G$-equivalent, then
  any of the functions above yield the same result for inputs $(\Omega,G,L,P)$
  and $(\Omega,G,M,P)$.
  By Lemma~\ref{lem:orbcount}, any \(G\)-equivalent lists $L$ and $M$ will
  produce the same list \(\Count\), up to reordering of elements, and hence the
  claim follows.
\end{proof}

Firstly we define a list of orderings. Each of these orderings chooses an orbit,
or list of orbits, to branch on -- we will then make an \(\Omega\)-selector by
choosing the smallest element in any of the orbits selected, to break ties (we
could choose any point, as long as we picked it consistently). Each of these
algorithms operates on a list \(L \in \LL_k\). In each case we look for an
orbit, ignoring orbits of size one (as fixing a point that was already fixed
leads to the same group).

Firstly we will consider two algorithms that only consider the group, and not \(L\):

\begin{enumerate}
\item \textbf{MinOrbit} Choose a point from a shortest non-trivial orbit that
  has a non-empty intersection with at least one element of \(L\).
\item \textbf{MaxOrbit} Choose a point from a longest non-trivial orbit that
  has a non-empty intersection with at least one element of $L$.
\end{enumerate}

We also consider four algorithms that consider both the group, and \(L\).

In the following, for an orbit \(o\)
\begin{enumerate}
\item \textbf{RareOrbit} minimises \(\sum\limits_{s \in L}\mid s \cap o|\),
\item \textbf{CommonOrbit} maximises \(\sum\limits_{s \in L}\mid s \cap o|\),
\item \textbf{RareRatioOrbit} minimises \({\log(\sum\limits_{s \in L}\mid s \cap o|)}/{|o|}\),
\item \textbf{CommonRatioOrbit} maximises \({\log(\sum\limits_{s \in L}\mid s \cap o|)}/{|o|}\).
\end{enumerate}

The motivation for \textbf{RareOrbit} is that this is the branch which will lead
to the smallest size of the next level of search -- this exactly estimates the
size of the next level if our ordering refiner only fixed a point in orbit
\(o\). We therefore expect, conversely, \textbf{CommonOrbit} to perform badly and to
produce very large searches.

One limitation of \textbf{RareOrbit} is that it will favour smaller orbits -- in
general we want to minimize the size of the whole search. The idea here is that
if we have two levels of search where we split on an orbit of size two, and each
time create 10 times more sets is equivalent to splitting once on an orbit of
size four, and creating 100 times more sets. \textbf{RareRatioOrbit} compensates
for this. We expect that \textbf{CommonRatioOrbit} is the inverse of this, so we also
expect it to perform badly.

For each of these orderings, we use the ordering refiner that
takes each fixed point of \(G\) in their order in \(\Omega\), and performs a point
refinement by each recursively in turn. By repeated application of
Lemma~\ref{lem:orbcount}, this is a \(G\)-invariant ordering refiner.

We also have a set of orderings which make use of orbit counting. To keep the
number of experiments under control, we used the \textbf{RareOrbit}
strategy in each case to choose which point to branch on next, and we also build an orbit
refiner.

Given an unordered list of orbit counts,
\begin{enumerate}
\item \textbf{RareOrbitPlusMin} chooses the lexicographically smallest one.
\item \textbf{RareOrbitPlusRare} chooses the least frequently occurring orbit
  count list (using the lexicographically smallest to break ties).
\item \textbf{RareOrbitPlusCommon} chooses the most frequently occurring orbit
  count list (using the lexicographically smallest to break ties).
\end{enumerate}

\subsection{Experiments}\label{sec:experiments}

In this section we perform a practical comparison of our algorithms, and the
\textbf{MinImage} algorithm of Linton, for three different families of problems:
grid groups, $m$-sets, and primitive groups.

\subsection{Experimental Design}\label{sec:probdef}

We will consider three sets of benchmarks in our testing. In each experiment,
given a permutation group that acts on \(\{1,\dots,n\}\), we will run
an experiment with each of our orderings to find the canonical image of a set of
size \(\myfloor{\frac{n}{2}}, \myfloor{\frac{n}{4}}\) and
\(\myfloor{\frac{n}{8}}\).

We run our algorithms on a randomly chosen conjugate of each primitive group, to
randomize the initial ordering of the integers the group is defined over. The
same conjugate is used of each group in all experiments, and when choosing a
random subset of size \(x\) from a set \(S\), we always choose the same random
subset. We use a timeout of five minutes for each experiment. We force GAP to build
a stabilizer chain for each of our groups before we begin our algorithm, because
this can in some cases take a long time.

For each size of set and each ordering, we measure three things. The total number
of problems solved, the total time taken to solve all problems, counting
timeouts as 5 minutes, and the number of moved points of the largest group
solved. Our experiments were all performed on an Intel(R) Xeon(R) CPU E5-2640 v4
running at 2.40GHz, with twenty cores. Each copy of GAP was allowed a maximum of
6GB of RAM.

\subsubsection{Grid Groups}

In this experiment, we look for canonical images of sets in grid groups.

\begin{defi}\label{def:gridgroup}
  Let $n \in \N$. The direct product $\SX{n} \times \SX{n}$ acts on the set
  $\{1,\ldots, n\} \times \{1,\ldots, n\}$ of pairs in the following way:

  For all $(i,j) \in  \{1,\ldots, n\} \times \{1,\ldots, n\}$
  and all $(\sigma,\tau) \in \SX{n} \times \SX{n}$ we define

  \[
    {(i,j)}^{(\sigma,\tau)} := (i^{\sigma}, j^{\tau}).
  \]

  The subgroup $G \le \Sym{\{1,\ldots, n\} \times \{1,\ldots, n\}}$ defined by this
  action is called the \textbf{$n \times n$ grid group}.
\end{defi}

We note that, while the construction of the grid group is done by starting with
an $n$ by $n$ grid of points and permuting rows or columns independently of each
other, we actually represent this group as a subgroup of $\SX{n\cdot n}$, and we
do not assume prior knowledge of the grid structure of the action.

We ran experiments on the grid groups for grids of size \(5 \times 5\) to \(100 \times
100\). The results of this experiment
are given in Table 1.

The basic algorithm, \textbf{MinImage}, is only able to solve \(22\) problems
within the timeout. \textbf{FixedMinOrbit} solves \(43\) problems, while being
implemented as a simple pre-processing step to \textbf{MinImage}. The dynamic
\textbf{MinOrbit} is able to solve \(55\) problems, and the best orbit-based
strategy, \textbf{SingleMaxOrbit}, solves \(70\) problems. However the advanced
techniques, which filter by orbit lists, perform much better.
Even ordering by the most common orbit list leads to solving over \(185\)
problems, and the best strategy, \textbf{RareOrbitPlusRare}, solves \(235\)
out of the total \(288\) problems.

Furthermore, for these large groups, the algorithms are still performing very
small searches: For example \textbf{FixedMinOrbit}, on its largest solved
problem with size \(\myfloor{\frac{n}{2}}\) sets and a grid of size \(12 \times 12\), generates
\(793,124\) search nodes, while \textbf{RareOrbitPlusMin} produces only \(183,579\)
search nodes on its largest solved problem with \(\myfloor{\frac{n}{2}}\) sets
(\(65 \times 65\)).

\afterpage{%
\clearpage\thispagestyle{empty}%
\begin{landscape}%
\centering%
\begin{tabular}{|l|l|l|r|r|r|r|r|r|r|r|r|}
\hline
&Stab&\multicolumn{3}{|c|}{\(\myfloor{\frac{n}{2}}\)}&\multicolumn{3}{|c|}{\(\myfloor{\frac{n}{4}}\)}&\multicolumn{3}{|c|}{\(\myfloor{\frac{n}{8}}\)} \\
&Search&\# solved&largest&time&\# solved&largest&time&\# solved&largest&time\\
\hline
RareOrbitPlusRare              & F &  56 & 4,225 & 12,549&  88 & 8,464 & 10,474&  91 & 9,216 & 13,663\\
RareOrbitPlusMin               & F &  54 & 4,225 & 13,207&  89 & 9,604 & 10,546&  90 & 9,025 & 13,484\\
RareOrbitPlusCommon            & F &  30 & 2,209 & 19,306&  65 & 5,476 & 15,408&  90 & 9,216 & 14,009\\
SingleMaxOrbit                 & F &  11 &   225 & 24,671&  27 &   961 & 25,795&  52 & 3,136 & 23,532\\
RareOrbit                      & F &   8 &   144 & 24,763&  17 &   625 & 27,213&  34 & 4,096 & 25,979\\
MinOrbit                       & F &   8 &   144 & 24,739&  16 &   400 & 27,735&  31 & 1,521 & 26,706\\
CommonRatioOrbit               & F &   7 &   121 & 24,990&  15 &   400 & 28,255&  29 & 1,296 & 27,322\\
FixedMinOrbit                  & T &   8 &   144 & 24,796&  13 &   361 & 28,619&  22 &   841 & 28,841\\
FixedMinOrbit                  & F &   8 &   144 & 24,786&  13 &   361 & 28,573&  21 &   841 & 28,859\\
MinImage                        & F &   4 &    64 & 25,819&   8 &   144 & 30,039&  10 &   196 & 31,851\\
MinImage                        & T &   4 &    64 & 25,822&   7 &   144 & 29,962&  10 &   196 & 31,829\\
RareRatioOrbit                 & F &   3 &    49 & 26,126&   5 &    81 & 30,379&   8 &   144 & 32,804\\
MaxOrbit                       & F &   3 &    49 & 26,123&   5 &    81 & 30,373&   8 &   144 & 32,771\\
FixedMaxOrbit                  & F &   3 &    49 & 26,120&   5 &    81 & 30,374&   8 &   144 & 32,725\\
FixedMaxOrbit                  & T &   3 &    49 & 26,113&   5 &    81 & 30,381&   8 &   144 & 32,450\\
CommonOrbit                    & F &   3 &    49 & 26,131&   5 &    81 & 30,396&   8 &   144 & 32,826\\
\hline
\end{tabular}
\label{gridtable}
\captionof{table}{Finding canonical images in grid groups}
\end{landscape}
\clearpage%
}

\subsubsection{M-Sets}

Linton \cite{Linton:SmallestImage} considers, given integers \(n\) and \(m\),
defining a permutation group on the set \(T\) of all subsets of size \(m\) of \(\{1,\dots,n\}\)
under the action on \(T\) of \(\SX{n}\) acting on the
members of the \(m\)-sets. He then looks for minimal images of randomly chosen
subsets of \(T\) of size \(k\), under the standard lexicographic ordering on sets.

We ran experiments for \(m = 2\) and \(n \in \{10,15,\dots,100\}\),
for \(m = 4\) and \(n \in
\{10,15,\dots,35\}\), for \(m=6\) and \(n \in \{10,15,20\}\) and
finally \(m=8\), \(n=10\) as described in Section \ref{sec:probdef}. We choose these 30
experiments as these were the problems that any of our techniques were able
to solve in under 5 minutes. The results of our experiments are shown in Table 2.

Similarly to our experiments on grid groups, we find that the standard
\textbf{MinImage} algorithm is only able to solve a very small set of benchmarks.
Some of the better algorithms, including \textbf{FixedMinOrbit},
are able to solve \(9\) problems. In particular, once again \textbf{MinOrbit}
is not significantly better than \textbf{FixedMinOrbit}, although it is slightly
faster on average over all problems.

However, the orbit-based strategies do much better, solving all the problems
which we set. In the case of sets that contain an eighth of all \(m\)-sets, the best
technique is able to solve all problems any technique can solve, in under 5
minutes. The largest solved problem, which was instance \(n=35, m=4\) for a set
on an eighth of all \(m\)-sets, is solved in only \(6,594\) search nodes by
\textbf{RareOrbitPlusMin}, while the largest solved problem of \textbf{MinImage},
\(n = 15, m = 4\) takes \(631,144\) search nodes.

\afterpage{%
\clearpage\thispagestyle{empty}%
\begin{landscape}%
\centering%
\begin{tabular}{|l|l|l|r|r|r|r|r|r|r|r|r|}
\hline
&Stab&\multicolumn{3}{|c|}{\(\myfloor{\frac{n}{2}}\)}&\multicolumn{3}{|c|}{\(\myfloor{\frac{n}{4}}\)}&\multicolumn{3}{|c|}{\(\myfloor{\frac{n}{8}}\)} \\
&Search&\# solved&largest&time&\# solved&largest&time&\# solved&largest&time\\
\hline
RareOrbitPlusMin               & F &  28 & 27,405 & 1,290&  30 & 52,360 &   835&  30 & 52,360 &   251\\
RareOrbitPlusRare              & F &  27 & 12,650 & 1,235&  30 & 52,360 & 1,107&  30 & 52,360 &   322\\
RareOrbitPlusCommon            & F &  27 & 12,650 & 1,819&  30 & 52,360 & 1,189&  30 & 52,360 &   335\\
MinOrbit                       & F &   9 & 1,365 & 6,701&  22 & 6,435 & 4,143&  28 & 27,405 & 1,203\\
RareOrbit                      & F &   9 & 1,365 & 6,769&  21 & 6,435 & 4,190&  28 & 27,405 & 1,276\\
FixedMinOrbit                  & F &   9 & 1,365 & 6,671&  22 & 6,435 & 4,292&  27 & 12,650 & 1,765\\
FixedMinOrbit                  & T &   9 & 1,365 & 6,678&  22 & 6,435 & 4,291&  27 & 12,650 & 1,766\\
CommonRatioOrbit               & F &   9 & 1,365 & 6,782&  21 & 6,435 & 4,215&  28 & 27,405 & 1,348\\
MinImage                        & F &   4 &   210 & 7,866&   5 &   210 & 7,562&   7 & 1,365 & 7,033\\
MinImage                        & T &   4 &   210 & 7,843&   5 &   210 & 7,573&   7 & 1,365 & 7,037\\
SingleMaxOrbit                 & F &   4 &   210 & 7,813&   5 &   210 & 7,554&   6 & 1,365 & 7,325\\
FixedMaxOrbit                  & T &   4 &   210 & 7,853&   5 &   210 & 7,691&   6 &   210 & 7,569\\
RareRatioOrbit                 & F &   4 &   210 & 7,923&   5 &   210 & 7,642&   5 &   210 & 7,500\\
MaxOrbit                       & F &   4 &   210 & 7,915&   5 &   210 & 7,695&   5 &   210 & 7,500\\
FixedMaxOrbit                  & F &   4 &   210 & 7,890&   5 &   210 & 7,666&   5 &   210 & 7,500\\
CommonOrbit                    & F &   4 &   210 & 7,947&   5 &   210 & 7,724&   5 &   210 & 7,500\\
\hline
\end{tabular}
\label{table:mset}
\captionof{table}{Finding canonical images in M-Set groups}
\end{landscape}
\clearpage%
}

\subsubsection{Comparison to Graph Canonical Image}

A set of \(2\)-sets can be viewed as an undirected graph, where the two sets represent the edges. The problem of finding the canonical image of this set of \(2\)-sets is equivalent to the traditional problem of finding a canonical image of this graph. We can therefore perform a comparison between our technique and Nauty, for these problems. Nauty is able to a find canonical image for all our \(2\)-set problems almost instantly. We investigated why Nauty was able to outperform us by such a large margin, and found three problems. We list the most important one first.

\begin{itemize}
\item The central algorithm of Nauty makes use of properties of the form ``vertices with \(i\) neighbors can only map to other vertices with \(i\) neighbors''. Our algorithm does not make use of this property, as it represents a much more complex condition when considered on \(m\)-sets. Further, while we could add a special case specifically for when the group we are considering is the symmetric group operating on \(m\)-sets, we would prefer to find a more general technique.
\item Our algorithm spends a large proportion of its time calculating stabilizer chains, and mapping sets through elements of the group. This is not required for the graphs.
\item Our algorithm is written in GAP rather than highly optimized C.
\end{itemize}

The most important results to draw from this comparison is that our algorithms should not be viewed as a replacement for graph isomorphism algorithms. We are investigating how to close this performance gap, without special casing.

\subsubsection{Primitive Groups}

In this experiment we look for canonical images of sets under the action of primitive groups
which move between \(2\) and \(1,000\) points. We remove the natural alternating and
symmetric groups, as finding minimal and canonical images in these groups is
trivial and can be easily special-cased.
So we look at a total number of \(5,845\) groups, each of which was successfully treated by at least one algorithm.

We perform the experiment as described in Section \ref{sec:probdef}. The results
are given in Table 3.

All algorithms are able to solve a large number of problems. This is unsurprising
as many primitive groups are quite small (for example the cyclic groups), so any
technique is able to brute-force search many problems. However, we can still see that,
for the hardest problems \(\myfloor{\frac{n}{2}}\), many algorithms
outperform \textbf{MinImage}, and the techniques that use extra orbit-counting
filtering solve 300 more problems, and they run much faster.

For the easiest set of problems, \(\myfloor{\frac{n}{8}}\), we see that the
algorithm \textbf{RareOrbitPlusMin}, which usually performs best, solves slightly fewer problems. This
is because there are a small number of groups where the extra filtering provides
no search reduction, but still requires a small overhead in time. However, the
total time taken is still much smaller, and the algorithm only fails to solve
five problems. These five problems involve groups that are isomorphic to the
affine general linear groups $\operatorname{AGL}(8,2)$,
$\operatorname{AGL}(6,3)$ and $\operatorname{AGL}(9,2)$, and
the projective linear group $\operatorname{PSL}(9,2)$. This suggests that the linear
groups may be a source of hard problems for canonical image algorithms in the future.

\afterpage{%
\clearpage\thispagestyle{empty}%
\begin{landscape}%
\centering%
\begin{tabular}{|l|l|l|r|r|r|r|r|r|r|r|r|}
\hline
&Stab&\multicolumn{2}{|c|}{\(\myfloor{\frac{n}{2}}\)}&\multicolumn{2}{|c|}{\(\myfloor{\frac{n}{4}}\)}&\multicolumn{2}{|c|}{\(\myfloor{\frac{n}{8}}\)} \\
&Search&\# solved&time&\# solved&time&\# solved&time\\
\hline
RareOrbitPlusMin               & F & 5,689 & 64,983& 5,749 & 39,564& 5,840 & 10,287\\
RareOrbitPlusRare              & F & 5,656 & 81,814& 5,738 & 43,202& 5,825 & 19,025\\
RareOrbitPlusCommon            & F & 5,561 & 113,011& 5,721 & 59,740& 5,816 & 25,392\\
FixedMinOrbit                  & T & 5,360 & 220,076& 5,623 & 82,684& 5,817 & 18,295\\
FixedMinOrbit                  & F & 5,354 & 251,786& 5,628 & 99,253& 5,816 & 30,193\\
MinOrbit                       & F & 5,348 & 263,053& 5,641 & 104,241& 5,844 & 27,720\\
RareOrbit                      & F & 5,324 & 272,631& 5,632 & 105,537& 5,844 & 29,365\\
CommonRatioOrbit               & F & 5,323 & 277,050& 5,629 & 107,561& 5,844 & 32,123\\
SingleMaxOrbit                 & F & 4,811 & 465,908& 5,250 & 253,467& 5,648 & 112,147\\
MinImage                        & T & 4,723 & 390,334& 5,163 & 242,477& 5,631 & 88,048\\
MinImage                        & F & 4,710 & 501,952& 5,180 & 280,686& 5,633 & 106,983\\
FixedMaxOrbit                  & F & 4,659 & 514,618& 5,119 & 298,321& 5,587 & 132,508\\
FixedMaxOrbit                  & T & 4,674 & 392,753& 5,095 & 251,001& 5,583 & 107,433\\
MaxOrbit                       & F & 4,641 & 544,222& 5,104 & 310,402& 5,587 & 144,182\\
RareRatioOrbit                 & F & 4,614 & 559,690& 5,090 & 316,609& 5,586 & 152,201\\
CommonOrbit                    & F & 4,604 & 569,047& 5,086 & 319,288& 5,586 & 154,142\\
\hline
\end{tabular}
\label{table:prim}
\captionof{table}{Finding canonical images in Primitive groups}
\end{landscape}
\clearpage%
}

\subsubsection{Experimental Conclusions}

Our experiments show that using \textbf{FixedMinOrbit} is almost always superior
to \textbf{MinImage}. As implementing \textbf{FixedMinOrbit} requires a fairly small
amount of code and time over \textbf{MinImage}, this suggests that any implementations of
Linton's algorithm should have \textbf{FixedMinOrbit} added, because this provides a substantial
performance boost, for relatively little extra coding.

Algorithms that dynamically
order the underlying set, such as \textbf{MinOrbit} and \textbf{RareOrbit}
provide only a small benefit over \textbf{FixedMinOrbit}. Algorithms which add
orbit counting provide a much bigger gain, often allowing solving problems on
groups many orders of magnitude larger than before, thereby greatly advancing the state
of the art.

\section{Conclusions}

We present a general framework and a new set of algorithms for finding the
canonical image of a set under the action of a permutation group. Our
experiments show that our new algorithms outperform the previous state of the art,
often by orders of magnitude.

Our basic framework runs on the concept of refiners and selectors and is not
limited to finding only canonical images of subsets of \(\Omega\). In future
work we will investigate families of refiners and selectors that allow finding
canonical images for many other combinatorial objects.

\vspace{1cm}

\textbf{Acknowledgements.}

All authors thank the DFG (\textbf{Wa 3089/6-1}) and the EPSRC CCP CoDiMa
(\textbf{EP/M022641/1}) for supporting this work. The first author would like to
thank the Royal Society, and the EPSRC (\textbf{EP/M003728/1}). The third
author would like to acknowledge support from the OpenDreamKit Horizon 2020
European Research Infrastructures Project (\#676541). The first and third author
thank the Algebra group at the Martin-Luther Universit\"at
Halle-Wittenberg for the hospitality and the inspiring environment. The fourth
author wishes to thank the Computer Science Department of the University of
St~Andrews for its hospitality during numerous visits.

\bibliography{canonical}
\bibliographystyle{plain}
\end{document}